\newtheorem{theorem}{Theorem}[section]
\newtheorem{corollary}[theorem]{Corollary}
\newtheorem{proposition}[theorem]{Proposition}
\theoremstyle{definition}
\newtheorem{definition}[theorem]{Definition}
\theoremstyle{remark}
\newtheorem{remark}{Remark}
\begin{document}
\title{On hyperholomorphic Bergman type spaces in domains of $\mathbb C^2$}
\small{
\author
{Jos\'e Oscar Gonz\'alez-Cervantes$^{(1)\footnote{corresponding author}}$ and Juan Bory-Reyes$^{(2)}$}
\vskip 1truecm
\date{\small $^{(1)}$ Departamento de Matem\'aticas, ESFM-Instituto Polit\'ecnico Nacional. 07338, Ciudad M\'exico, M\'exico\\ Email: jogc200678@gmail.com\\$^{(2)}$ {SEPI, ESIME-Zacatenco-Instituto Polit\'ecnico Nacional. 07338, Ciudad M\'exico, M\'exico}\\Email: juanboryreyes@yahoo.com
}

\maketitle
\begin{abstract}
Quaternionic analysis is regarded as a broadly accepted branch of classical analysis referring to many different types of extensions of the Cauchy-Riemann equations to the quaternion skew field $\mathbb H$.

In this work we deals with a well-known $(\theta, u)-$hyperholomorphic $\mathbb H-$valued functions class related to elements of the kernel of the Helmholtz operator with a parameter $u \in\mathbb H$, just in the same way as the usual quaternionic analysis is related to the set of the harmonic functions.

Given a domain $\Omega\subset\mathbb H\cong \mathbb C^2$, we define and study a Bergman spaces theory for $(\theta, u)-$ hyperholomorphic quaternion-valued functions introduced as elements of the kernel of ${}^{\theta}_{u}\mathcal D [f]= {}^{\theta} \mathcal D [f] + u f$ with
$u\in \mathbb  H$ defined in $C^1(\Omega, \mathbb H)$, where  
\[
{}^\theta\mathcal D:= \frac{\partial}{\partial \bar z_1} + ie^{i\theta}\frac{\partial}{\partial z_2}j =
\frac{\partial}{\partial \bar z_1} + ie^{i\theta}j\frac{\partial}{\partial \bar z_2},\hspace{0.5cm}  \theta\in[0,2\pi).
\] 
Using as a guiding fact that $(\theta, u)-$hyperholomorphic functions includes, as a proper subset, all complex valued holomorphic functions of two complex variables we obtain some assertions for the theory of Bergman spaces and Bergman operators in domains of $\mathbb C^2$, in particular, existence of a reproducing kernel, its projection and their covariant and invariant properties of certain objects. 

\end{abstract}

\vspace{0.3cm}

\small{
\noindent
\textbf{Keywords.} Quaternionic weighted Bergman spaces;  reproducing kernel;  holomorphic function theory in two complex variables; covariant and invariant conformal properties.\\
\noindent
\textbf{Mathematics Subject Classification (2020).} 30G35.}

\section{Introduction}
The history of Bergman spaces goes back to the book \cite{B} in the early fifties by S. Bergman, where the first systematic treatment of the subject for holomorphic functions of one complex variable was given, and since then there have been a lot of papers on the subject in different function classes. Some standard works here are \cite{BHZ, DS, HKZ} and the references therein, which contain a broad summary and historical notes of the subject, that frees us from referring to missing details. 

Nowadays, quaternionic analysis is regarded as a broadly accepted branch of classical analysis offering a successful generalization of complex holomorphic function theory, the most renowned examples are Sudbery's paper\cite{S} and several already published books see e.g., \cite{GS1}. It relies heavily on results on functions defined on domains in $\mathbb R^4$ with values in the skew field of real quaternions $\mathbb H$ associated to a generalized Cauchy-Riemann operator by using a general orthonormal basis in $\mathbb R^4$ (a so-called structural set $\psi$ of $\mathbb H^4$, see \cite{No1}). This theory is centered around the concept of $\psi-$hyperholomorphic functions, see \cite{MS, S1, S2}.

There has been a great deal of work done over the recent years on weighted Bergman spaces for $\psi-$hyperholomorphic functions in $\mathbb R^4$, see \cite{GM, SV3}. For a natural development of Bergman kernel function in Clifford analysis we refer the reader to \cite{BD, SV4}. 

Despite the fact that theory of $\psi-$hyperholomorphic functions has been generalized and studied extensively to that of $(\psi,u)-$hyperholomorphic functions with a quaternionic parameter $u$ (see \cite{KS} and the references given there), to the best of the authors knowledge, a Bergman theory for such functions classes is still open and this is precisely the main goal of the present work.

The outline of the paper reads as follows. After this short introduction, we briefly review in Section 2 some basic notation and definitions related to quaternionic analysis based on properties of $\psi-$hyperholomorphic functions, where $\psi$ is the structural set. In Section 3 we review the basic
definitions and results of $(\theta; u)-$hyperholomorphic theory and it is also introduced the corresponding Bergman spaces. Section 4 discusses the case of $(\theta; u)-$hyperholomorphic Bergman type spaces in domains of $\mathbb C^2$. Section 5 gives some concluding remarks.

\section{Preliminaries}  
Consider the skew field of real quaternions $\mathbb H$ with its basic elements $1, i, j, k$. Thus any element $x$ from $\mathbb H$ is of the form $x=x_0+x_{1}i+x_{2}j+x_{3}k$, $x_{s}\in \mathbb R, s= 0,1,2,3$. The basic elements define arithmetic rules in $\mathbb H$: by definition $i^{2}=j^{2}=k^{2}=-1$, $i\,j=-j\,i=k; j\,k=-j\,k=i$ and $k\,i=-k\,i=j$. The imaginary unit of $\mathbb C$ is identify with $i$. For $x\in \mathbb H$ we define the mapping of quaternionic conjugation: $x\rightarrow {\overline x}:=x_0-x_{1}i-x_{2}j-x_{3}k$. In this way it is easy seen that $x\,{\overline x}={\overline x}\,x=x^{2}_{0}+x^{2}_{1}+x^{2}_{2}+x^{2}_{3}$. Note that $\overline {xy}={\overline y}\,\,{\overline x}$ for $x,y\in \mathbb H$. We have for $a\in \mathbb C$ and $\overline a$ its complex conjugate, that $a\,j=j\,\overline a$.

In order to deal with the classical theory of holomorphic $\mathbb C-$valued functions of two complex variables, we will identify $\mathbb H$ with $\mathbb C^2$ (or $\mathbb R^4$) by mean of the mapping
\begin{equation} \label{mapping}
x_0+x_{1}i+x_{2}j+x_{3}k\rightarrow (x_0+ix_1)+(x_2+ix_3)j \rightarrow (x_0,x_1,x_2,x_3).
\end{equation}
From now on, we will use this fact in essential way.

The set of elements of the form $q=z_{1}+z_{2}j$, $z_{1},z_{2}\in \mathbb C$, endowed both with a component-wise addition and with the associative multiplication is then another way of stating $\mathbb H$. In particular, the two quaternions $q=z_1+z_2j$ and $\xi=\zeta_1+\zeta_2j$ are multiplied according the rule:
$q\,\xi=(z_1\zeta_1-z_2\bar\zeta_2)+(z_1\zeta_2+z_2\bar\zeta_1)j$. The quaternion conjugation gives: $\overline{z_1+z_2j}:= \bar z_1-z_2j$ and $q\,{\overline q}={\overline q}\, q=|z_{1}|^{2}+|z_{2}|^{2}$.

Quaternionic analysis is regarded as a broadly accepted branch of classical analysis offering a successful generalization of complex holomorphic function theory. It relies heavily on results on functions defined on domains in $\mathbb R^4$ with values in $\mathbb H$ associated to a generalized Cauchy-Riemann operator by using a general orthonormal basis in $\mathbb R^4$ (a so-called structural set $\psi=(1,\psi_1,\psi_2,\psi_3)$ of $\mathbb H^4$, see \cite{No1}). 

This theory is centered around the concept of $\psi-$hyperholomorphic functions, i.e., the collection of all null solutions of the generalized Cauchy-Riemann operator 
$$
{}^\psi\mathcal D=\frac{\partial}{\partial   x_0} +  \psi_1\frac{\partial}{\partial x_1} +  \psi_2\frac{\partial}{\partial x_2} + \psi_3
\frac{\partial}{\partial x_3},
$$ 
see \cite{MS, S1, S2} for more details.

Functions $f$ defined in a bounded domain $\Omega\subset \mathbb R^4\cong \mathbb C^2$ with value in $\mathbb H$ are considered. They may be written as: $f=f_0+f_{1}i+f_{2}j+f_{3}k$, where $f_s, s= 0,1,2,3,$ are $\mathbb R$-valued functions in $\Omega$. Properties as continuity, differentiability, integrability and so on, which as ascribed to $f$ have to be posed by all components $f_s$. We will follow standard notation, for example $C^{1}(\Omega, \mathbb H)$ denotes the set of continuously differentiable $\mathbb H$-valued functions defined in $\Omega$. 

Given  $0\leq \theta < 2\pi$, introduce the left Cauchy--Riemann operator
\[{}^\theta\mathcal D=2\left\{
\frac{\partial}{\partial \bar z_1} + i
e^{i\theta}\frac{\partial}{\partial z_2}j\right\} =2\left\{
\frac{\partial}{\partial \bar z_1} + i
e^{i\theta}j\frac{\partial}{\partial \bar z_2}\right\}
\]
defined on $C^{1}(\Omega, \mathbb H)$, where $z= x_0 + x_1i + ie^{i\theta}j x_2+  e^{i\theta}j x_3 = z_1+   ie^{i\theta}j z_2$  with 
 $z_1=x_0+x_1i$ and $z_2=x_2+x_3 i$. It represents the complex form of the quaternionic operator
\[{}^\theta\mathcal D:= \frac{\partial}{\partial   x_0} + i
 \frac{\partial}{\partial x_1} +   i e^{i\theta} j \frac{\partial}{\partial x_2} +  e^{i\theta} j
\frac{\partial}{\partial x_3},\]
which is ${}^\psi\mathcal D$ associated to the structural set $\psi=(1, i , i e^{i\theta} j, e^{i\theta} j)$.

If ${}^\theta\mathcal D $ acts on the right it is then denoted by ${}^\theta\mathcal D_r$.  These operators operators decompose the four-dimensional Laplace operator:
$${}^\theta\mathcal D\circ \overline{{}^\theta\mathcal D}=\overline{ {}^\theta\mathcal D} \circ {}^\theta\mathcal D =
\overline{{}^\theta\mathcal D_r }\circ {}^\theta\mathcal D_r =
{}^\theta\mathcal D_r \circ \overline{{}^\theta\mathcal
D_r} =\bigtriangleup_{\mathbb R^4}.$$

The elements of the sets ${}^\theta\mathfrak M(\Omega, \mathbb H)=ker \ {}^\theta\mathcal D$ and ${}^\theta \mathfrak M_r(\Omega, \mathbb H)= ker \ {}^\theta\mathcal D_r$ are called left (respectively right) $\theta$-hyperholomorphic functions on $\Omega$.

These spaces ${}^\theta\mathfrak M$ and ${}^\theta \mathfrak M_r$ are right-quaternionic Banach modules, although they are also real linear spaces (from both sides). This has the disadvantage that ${}^\theta\mathcal D_r$ acts on them only as a real linear operator, not a quaternionic linear operator, but in our context it will be good enough since the principal operator under consideration is ${}^\theta\mathcal D$.

In addition,  $f=f_1+f_2j\in {}^\theta\mathfrak M(\Omega, \mathbb H)$, where $f_1,f_2: \Omega\to \mathbb C$, if and only if 
$$
\left\{
\begin{array}{l}
\displaystyle{\frac{\partial f_1}{\partial \bar z_1}  =  
ie^{i\theta}\overline{\frac{\partial f_2
}{\partial \bar z_2}}},\\
 \\
\displaystyle{\frac{\partial f_1}{\partial \bar z_2}  =  
-ie^{i\theta}\overline{\frac{\partial f_2}{\partial \bar z_1}}}.
\end{array} \right.
$$
Let us consider the following spaces of holomorphic maps:
\[
Hol(\Omega,\mathbb C)=\{f\in C^{1}(\Omega, \mathbb C): \displaystyle\frac{\partial f}{\partial \bar z_1}=0, \displaystyle\frac{\partial f}{\partial \bar z_2}=0\}
\]
and more generally
\[
Hol(\Omega,\mathbb C^2)=\{f=(f_1, f_2): f_1, f_2 \in Hol(\Omega,\mathbb C)\}.
\]
The relation
\[\displaystyle{Hol(\Omega,\mathbb C^2)=\bigcap_{0\leq \theta < 2\pi} {}^\theta\mathfrak M(\Omega)}\]
can be found in \cite{MS}.

Given $x,y\in \mathbb H$, denote 
$
 \langle x,y \rangle_{\theta   }=\sum_{k=0}^3 x_k y_k 
$, where $x_{\theta   } =x_0 +x_1 i +x_2 i e^{i\theta} j+ x_3 
e^{i\theta} j $ and $ y_{\theta   } =y_0 +y_1 i +y_2 i e^{i\theta} j+ y_3 
e^{i\theta} j $  with $x_k, y_k\in \mathbb R$ for all $k$.

Set $z_{\theta}= x_0 + x_1i + ie^{i\theta}j x_2+  e^{i\theta}j x_3 = z_1+   ie^{i\theta}j z_2$, for $z\in \mathbb H$, where    
 $z_1=x_0+x_1i, z_2=x_2+x_3 i \in \mathbb C$. The mapping $z_{\theta}\to (z_1,z_2)$ establishes the following  operations in $\mathbb C^2$, according to the quaternionic algebraic structure.  
\begin{itemize}
\item  $zj=j\bar z$ for all $z\in\mathbb C$,
 \item $(z_1, z_2   )  \pm (\zeta_1 , \zeta_2   )=  (z_1 \pm   \zeta_1, z_2\pm
\zeta_2) ,
$ \item $(z_1, z_2   )  ( \zeta _1,   \zeta _2   )=
( z_1 \zeta _1-  \bar{ z}_2  \zeta _2,  \bar z_1 \zeta _2+ z_2   \zeta _1
 ) .$
\item $\overline{ (z_1,  z_2 ) } =( \bar {z}_1 ,  -
z_2)$.
\item $| z_1+ z_2j |^2:= |z_1|_{\mathbb C}^2+|z_2|_{\mathbb C}^2=(z_1,  z_2  )( \bar z_1 ,  - z_2  )$. 
\item Topology in $\mathbb C^2$ is determined by the metric $d(\zeta, z):= |\zeta -z |$ for $z,\zeta\in\mathbb C^2$.
\end{itemize}

For abbreviation we write $z$ instead  of $z_{\theta}$ for the elements of the domains of our functions and in case that we were using the representation  $z=z_1+z_2j$ to indicate that the mapping $ z_1 +z_2 j=x_1+ y_1i+ (x_2 + y_2i)j =x_1+ y_1i+ x_2 j + y_2 k  \in\mathbb H $ we shall write it explicitly. 
		
Let $\Omega\subset \mathbb C^2$ be a bounded domain with its boundary $\partial \Omega$ a compact $3-$dimensional sufficiently smooth hypersurface (co-dimension 1 manifold). The following formulas can be found in many sources (see, for example \cite{GM, MS}).

Given $f, g \in C^1(\overline{\Omega},\mathbb H)$ we have the quaternionic Stokes formulas
\begin{equation} \label{Dif-stokes} d(g\sigma^{\theta}_{\zeta}  f) = 
  g \ {}^{{     \theta   }}\mathcal  D[f]+ \  \mathcal D^{{     \theta   }}[g] f ,
	\end{equation}
	\begin{equation}	 \label{Int-stokes}   \int_{\partial \Omega} g
	\sigma^{\theta}_{\zeta}
	f     =       \int_{\Omega } \left( g    {}^\theta    \mathcal  D[f] +   \mathcal  D^{{     \theta   }}[g] f   \right)  d\mu, 
\end{equation}
and the quaternionic Borel-Pompieu formula  
\begin{align}  &  \int_{\partial \Omega }  ( K_{\theta   }(\zeta - z )\sigma_{\zeta 
}^{\theta} f(\zeta )  +  g(\zeta )   \sigma_{\zeta }^{\theta   } K_{\theta   }(\zeta - z ) ) \nonumber  \\ 
&  - 
\int_{\Omega} (K_{\theta   } ( \zeta - z )
  {}^{\theta   }\mathcal D [f] ( \zeta )  +    \mathcal  D^{{     \theta   }} [g] ( \zeta )   K_{\theta   } ( \zeta - z )
     )d\mu_{\zeta}   \nonumber \\
		=  &    \label{ecua4}   \left\{ \begin{array}{ll}  f( z ) + g( z ) , &   z \in \Omega,  \\ 0 , &   z \in \mathbb H\setminus\overline{\Omega},    \end{array} \right. 
\end{align} 
where 
$$\displaystyle{ K_{\theta}(\zeta -z ):= \frac{ 1}{2\pi^2
|\zeta- z|^4} [(\bar\zeta_1- \bar z_1) - i e^{-i\theta}
(\bar\zeta_2-  \bar z_2)j ]}$$
is called $\theta$-hyperholomorphic Cauchy kernel and 
$$\sigma^{\theta}_{\zeta}:=\frac{1}{2} [d \bar\zeta_{[2]} \wedge d\zeta - ie^{i\theta}d \zeta_{[1]} \wedge d\bar \zeta j],$$
where $d \zeta_{[i]}$ denotes as usual $d\zeta_0 \wedge d\zeta_1 \wedge d\zeta_2 \wedge d\zeta_3$ omitting the factor $d\zeta_i$, represents the $\mathbb H-$valued area form to $\partial \Omega$ and $d\mu_\zeta$ stands for the $4-$dimensional volume element in $\Omega$. Here  $C^1(\overline{\Omega},\mathbb H)$ denotes the subclass of functions which can be extended smoothly to an open set containing the closure $\overline{\Omega}$.
	
From the above we can see that the $\theta$-hyperholomorphic Cauchy kernel generates the following important operator.
\begin{equation}\label{e3}
 {}^\theta T [f](z):=\int_{\Omega} K_\theta(\zeta-z) f(\zeta) d\mu_\zeta
\end{equation}
on $\mathcal L_2(\Omega,\mathbb H)\cup C(\Omega,\mathbb H)$ and meets ${}^\theta\mathcal D  \circ  {}^\theta T  = I$. This can be found in \cite{GM, MS}.
  
Given a one-to-one correspondence $g :\Xi \to \Omega$  and  $h:\Omega\to \mathbb H$,  denote $W_g[ f]= f\circ g$  and $ {}^h M[ f ]=  h f$ for $f$ in a function space associated to $\Omega$. 

The M\"obius transformations in $\mathbb R^4\cong\mathbb C^2$ can be represented  by quaternionic fractional-linear transformations:
\begin{equation}\label{e4}
T(z)= (az+b)(cz+d)^{-1},
\end{equation}
where $a,b,c,d\in\mathbb H$ satisfy $ad\neq 0$  if $c=0$ {} and {} $b-ac^{-1}d\neq 0$ if $c\neq 0$, see  \cite{DF, HG}
	
Let $\Omega, \Xi\subset\mathbb R^4$ two conformal equivalent  domains, i.e., there exists $T$ given by \eqref{e4} such that $T(\Xi)=\Omega$. Denote $\zeta= T(z)\in \Omega$ for $z\in \Xi$ and define the following functions:  

\begin{align}\label{e6}
 A_{T}(z):=&   \left\{ \begin{array}{l}
\bar d, \quad  \textrm{if {} }  c=0; \\
\\
\displaystyle{\overline{(b-ac^{-1}d)} \frac{\overline{c z
(b-ac^{-1}d)^{-1} + d(b-ac^{-1}d)^{-1} }}{|c z (b-ac^{-1}d)^{-1} +
d(b-ac^{-1}d)^{-1}|^{4}} }, \quad \textrm{if {} } c\neq  0,
 \end{array}\right.  \\
    B_{T}(\zeta):=&   \left\{
\begin{array}{l}
\bar a, \quad  \textrm{if {} }  c=0; \\
\\
\displaystyle{- \bar c \overline{(\zeta -ac^{-1})}  |\zeta
-ac^{-1}|^{4} },  \quad \textrm{if {} } c\neq 0,
 \end{array}\right.\\
  C_T(z):= &  \left\{
\begin{array}{l}
 \displaystyle{\frac{|a|^2}{|d|}d^{-1},\quad \quad  \textrm{ if  $c=0$,
}}
 \\
 \\
\displaystyle{
   \frac{|c|^2}{|b-ac^{-1}d|}(b-ac^{-1}d)^{-1}
   \frac{\overline{cz(b-ac^{-1}d)^{-1}+
d(b-ac^{-1}d)^{-1}}}{|cz(b-ac^{-1}d)^{-1}+ d(b-ac^{-1}d)^{-1}|^4},  }
   \\
  \hspace{5.5cm}  \textrm{if $c\neq 0$},
\end{array} \right. \\
 \rho_T(z):= & \left\{
\begin{array}{l}
1,  \quad \quad \quad \textrm{if  $c=0$,}
\\
\\
\displaystyle{\frac{1}{|c z (b-ac^{-1}d)^{-1} +
d(b-ac^{-1}d)^{-1}|^2}}, \hspace{2cm}\textrm{ if {} } c\neq 0.
\end{array} \right. 
\end{align}

The ${\theta}$-hyperholomorphic Bergman space associated to $\Omega\subset \mathbb H$, is defined by
$${}^{\theta}\mathcal A (\Omega) :={}^{\theta}\mathfrak M(\Omega, \mathbb H)\cap \mathcal L_{2 }(\Omega,\mathbb H)$$ 
and is a quaternionic right-Hilbert space equipped with the inner product and norm inherited from $\mathcal L_{2 }(\Omega,\mathbb H)$. Moreover, there were proved the following assertions:
\begin{enumerate}
\item 	Analogue of the chain rule   
    \begin{equation} \label{e7}{}^\theta\mathcal D_z
[A_{T} f\circ T]= (B_{T}\circ T )\ {}^{\theta}
 \mathcal D_\zeta[f]\circ T,
 \end{equation}for all   $f\in C^1(\Omega, \mathbb H)$. 
\item   ${}^{C_T}M\circ W_{T}:\mathcal
L_2(\Omega,\mathbb H) \longrightarrow  \mathcal
L_{2,\rho_T}(\Xi,\mathbb H)$ is an isometric isomorphism of
quaternionic right-Hilbert spaces and 
  \begin{equation}\label{L_2ISOMOR}
  \int_{\Xi} \overline{    C_T  f\circ T   }   C_T g\circ T \rho_{_T} d\mu   =
\int_{\Omega}      \bar{f  }   g d\mu ,  
\end{equation}
for all $f,g\in\mathcal L_2(\Omega, \mathbb H)$.
\end{enumerate}  

\begin{remark}
The identity \eqref{e7} is an usual phenomena in hypercomplex analysis whose antecedent  within complex analysis is the formula used when deriving a composition of analytic functions, called chain rule and the functions $A_T$ and $B_T$ are known as  conformal weights. On the other hand, $\rho_T$  is a weight function used to define an  isometric isomorphism of quaternionic right-Hilbert spaces in \eqref{L_2ISOMOR}. For more details about conformal weights we refer the reader to \cite [pp. 364-366]{GM}.
\end{remark}
\section{$(\theta, u)$-hyperholomorphic Bergman type spaces}
Before introduce the Bergman type space to be studied, we present briefly the basic definitions and results of $(\theta, u)$-hyperholomorphic theory necessary for our purpose. For more information, we refer the reader to \cite{KS}.

Let $\Omega\subset \mathbb H$ be a domain and  $u\in \mathbb H$. The Helmholtz type operators with a quaternionic wave number $u^2$ that we are going to consider, which act on $C^2(\Omega,\mathbb H)$, must be left and right:
$$
{}_{{u^2}}\bigtriangleup=\bigtriangleup_{\mathbb R^4}+u^2 f
$$
and
$$
{}_{{u^2}, r}\bigtriangleup=\bigtriangleup_{\mathbb R^4}+ f u^2,
$$
respectively.

Consider the left- and the right- $(\theta, u)$-Cauchy-Riemann type operators defined by   
$$ {}^{{\theta}}_{{u}}\mathcal D  [f] := {}^{{\theta}}\mathcal D[f] +  u f $$
and
$$ {}^{{\theta}}\mathcal D_{{ r, u}} [f] := {}^{{\theta}}\mathcal D_r[f] + f u,$$  
for all $f\in C^1(\Omega,\mathbb H)$ respectively.

Thus
\begin{align*}
	{}^{{\theta}}\mathcal D_{{r, u}} \circ {}^{{\theta}}_{{u}}\mathcal D [f] =& 
 {}^{{\theta}}\mathcal D_r[{}^{{\theta}}_{{u}}\mathcal D [f]] + ({}^{{\theta}}_{{u}}\mathcal D [f]) u \\
= & {}^{{\theta}}\mathcal D_r[ {}^{{\theta}}\mathcal D[f] + u f] + ({}^{{\theta}}\mathcal D[f] +  u f) u \\ 
 =  &{}^{{\theta}}\mathcal D_r\circ {}^{{\theta}}\mathcal D[f] + {}^{{\theta}}\mathcal D_r[u f] + {}^{{\theta}}\mathcal D[f] u +  u f u.
\end{align*}

\begin{align*}
  \overline{{}^{{\theta}}\mathcal D}_{{r, \bar u}} \circ {}^{{\theta}}_{{u}}\mathcal D [f] =& 
 \overline{ {}^{{\theta}}\mathcal D_r}[{}^{{\theta}}_{{u}}\mathcal D  [f] ] + ({}^{{\theta}}_{{u}}\mathcal D [f] ) \bar u \\
= &   \overline{{}^{{\theta}}\mathcal D_r}\circ {}^{{\theta}}\mathcal D[f] + \overline{{}^{{\theta}}\mathcal D_r}[u f ] + {}^{{\theta}}\mathcal D[f] \bar u +  u f  \bar u.
\end{align*}
\begin{align*}
  \overline{{}^{{\theta}}_{\bar u}\mathcal D} \circ {}^{{\theta}}_{{u}}\mathcal D [f] =& 
 \overline{ {}^{{\theta}}\mathcal D}[{}^{{\theta}}_{{u}}\mathcal D  [f] ] +  \bar u ({}^{{\theta}}_{{u}}\mathcal D [f] )\\
 =   &\overline{{}^{{\theta}}\mathcal D}\circ {}^{{\theta}}\mathcal D[f] + \overline{{}^{{\theta}}\mathcal D}[u f ] +
\bar  u {}^{{\theta}}\mathcal D[f]  +  |u|^2 f\\
= & \Delta_{\mathbb R^4}[f] +  |u|^2 f  + \overline{{}^{{\theta}}\mathcal D}[u f ] + \bar u {}^{{\theta}}\mathcal D[f].
 \end{align*}
In the particular case when $f(x)$ be independent from the scalar part of $x$ and $u\in \mathbb R$ we have
\begin{align*}
\overline{ {}^{{\theta}}_{\bar u}\mathcal D} \circ {}^{{\theta}}_{{u}}\mathcal D [f] = & \Delta_{\mathbb R^4}[f] +  |u|^2 f.
\end{align*}

In analogy with the notion of $\theta-$hyperholomorphic function, consider the following definition of $(\theta, u)-$hyperholomorphic functions:

We will denote by ${}_{u}^{\theta}\mathfrak M(\Omega,\mathbb H)= C^1(\Omega,\mathbb H) \cap Ker {}^{\theta}_{u}\mathcal D$ the quaternionic right-linear space of the left $(\theta, u)-$hyperholomorphic functions, {$(\theta, u)-$hyperholomorphic functions for brevity}, associated to $\Omega \subset\mathbb H$. 

The quaternionic left-linear space of the $r-(\theta, u)-$hyperholomorphic functions on $\Omega$ will be denoted by $ {}^{\theta}\mathfrak M_{r,u}(\Omega,\mathbb H)= C^1(\Omega,\mathbb H) \cap Ker  {}^{\theta}\mathcal D_{r,u}$. 

\begin{remark}
Set $u=  u_1 + ie^{i\theta}j u_2$ for $u_1, u_2\in \mathbb C$. Then, $f=f_1+f_2j\in {}_{u }^{\theta}\mathfrak M(\Omega,\mathbb H)$ if and only if 
\begin{equation}\label{e1}
\left\{
\begin{array}{l}
\displaystyle{\frac{\partial f_1}{\partial \bar z_1} + u_1 f_1 =
ie^{i\theta}\overline{(\frac{\partial f_2
}{\partial \bar z_2}+ u_2 f_2) }},\\
\\
\displaystyle{\frac{\partial f_1}{\partial \bar z_2} + u_2 f_1 =
-ie^{i\theta}\overline{(\frac{\partial f_2}{\partial \bar z_1} + u_1f_2 ) }}.
\end{array} \right.
\end{equation}
\end{remark}

We can now state and prove the analogues of Stokes and Borel-Pompieu formulas for the $(\theta, u)$-hyperholomorphic functions: 
\begin{proposition}\label{Stokes-Borel-Pompieu}
Let $\Omega\subset \mathbb H$ be a  bounded domain with $\partial \Omega$ a $3-$dimensional compact sufficiently smooth surface. Then 
\begin{align}
d(f (z ) \nu_{u }^\theta   (z )  g(z )) = & ( {}^{\theta   }\mathcal D_{r,u }[f] (z )g (z ) + 
  f(z ) {}^{\theta   }_{u}\mathcal D[g](z ) ) e^{2  \langle u ,z  \rangle_{\theta   }}, \nonumber\\
\label{STOkes_alpha_psi}\int_{\partial \Omega} f(z ) \nu_{u }^\theta   (z )  g(z ) = &\int_{\Omega} ( {}^{\theta   }\mathcal D_{r,u }[f]( \zeta  ) g( \zeta  )  +   f( \zeta  ) {}^{\theta   }_{u }\mathcal D[g]( \zeta  ) ) d\lambda_{u }^{\theta   }( \zeta  ) 
\end{align}
and
\begin{align}  &  \int_{\partial \Omega }  ( K^{\theta   }_{u }( \zeta   -z )\sigma_{ \zeta   }^{\theta   } f( \zeta   )  +  g( \zeta   )   \sigma_{ \zeta   }^{\theta   } K^{\theta   }_{u }( \zeta   -z ) ) \nonumber  \\ 
&  - 
\int_{\Omega} (K^{\theta   }_{u } ( \zeta  -z )
  {}^{\theta   }_{u }\mathcal D [f] ( \zeta  )  +     {}^{\theta   }\mathcal D_{r, u } [g] ( \zeta  )   K^{\theta   }_{u } ( \zeta  -z )
     )d  {\mu}_ \zeta     \nonumber \\
		=  &    \label{ecuaNEWCauchw }  \left\{ \begin{array}{ll}  f(z ) + g(z ) , &  z \in \Omega,  \\ 0 , &  z \in \mathbb H\setminus\overline{\Omega} ,                    \end{array} \right. 
		\end{align} 
	for all $f,g \in C^1( {\Omega}, \mathbb H)\cap C(\overline{\Omega}, \mathbb H)$ is obtained,  
   where $
 d\lambda_{u }^{\theta   }( \zeta   )=   e^{2  \langle u , \zeta    \rangle_{\theta   }} d\mu_ \zeta   $, $ \nu_{u }^\theta   ( \zeta   ) =e^{2  \langle u , \zeta    \rangle_{\theta   }} \sigma_ \zeta   ^{\theta   }$  
and $
 K^{\theta   }_{u } ( \zeta -z ) =   e^{  \langle u ,  \zeta -z  \rangle_{\theta   }} K_{\theta   } ( \zeta -z )
 $  
is a $(\theta, u)-$hyperholomorphic reproducing function.
\end{proposition}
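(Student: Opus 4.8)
The plan is to reduce all three identities to the $\theta$-hyperholomorphic formulas \eqref{Dif-stokes}, \eqref{Int-stokes} and \eqref{ecua4} already recorded above, by conjugating with the real scalar weight $e^{\langle u,\,\cdot\,\rangle_{\theta}}$. The engine of the argument is the elementary remark that, since $\langle u,z\rangle_{\theta}=\sum_{k=0}^{3}u_{k}x_{k}$ is a real-valued linear function of the coordinates $x_{0},\dots,x_{3}$ of $z$ and $\psi=(1,i,ie^{i\theta}j,e^{i\theta}j)$ is the structural set defining ${}^\theta\mathcal D$,
\[
{}^\theta\mathcal D[e^{\langle u,z\rangle_{\theta}}]=\Big(\sum_{k=0}^{3}\psi_{k}u_{k}\Big)e^{\langle u,z\rangle_{\theta}}=u\,e^{\langle u,z\rangle_{\theta}},\qquad {}^\theta\mathcal D_{r}[e^{\langle u,z\rangle_{\theta}}]=e^{\langle u,z\rangle_{\theta}}\,u .
\]
Because $e^{\langle u,z\rangle_{\theta}}$ is real it commutes with every quaternion, so a one-line Leibniz computation promotes this to the two intertwining identities
\[
{}^\theta\mathcal D[e^{\langle u,z\rangle_{\theta}}g]=e^{\langle u,z\rangle_{\theta}}\,{}^{\theta}_{u}\mathcal D[g],\qquad {}^\theta\mathcal D_{r}[f\,e^{\langle u,z\rangle_{\theta}}]={}^{\theta}\mathcal D_{r,u}[f]\,e^{\langle u,z\rangle_{\theta}},
\]
valid for every $f,g\in C^{1}(\Omega,\mathbb H)$. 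They say precisely that multiplication by $e^{\langle u,\,\cdot\,\rangle_{\theta}}$ conjugates the modified operators ${}^{\theta}_{u}\mathcal D$ and ${}^{\theta}\mathcal D_{r,u}$ back to the plain ${}^\theta\mathcal D$ and ${}^\theta\mathcal D_{r}$.

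For the Stokes identities I would split $e^{2\langle u,\zeta\rangle_{\theta}}=e^{\langle u,\zeta\rangle_{\theta}}e^{\langle u,\zeta\rangle_{\theta}}$ and put $F:=f\,e^{\langle u,\zeta\rangle_{\theta}}$ and $G:=e^{\langle u,\zeta\rangle_{\theta}}g$. As the scalar weight slides freely past $\sigma_{\zeta}^{\theta}$ one has $f\,\nu_{u}^{\theta}\,g=F\,\sigma_{\zeta}^{\theta}\,G$, so applying \eqref{Dif-stokes} to the pair $(F,G)$ and inserting the two intertwining identities gives
\[
d(f\,\nu_{u}^{\theta}\,g)=F\,{}^\theta\mathcal D[G]+{}^\theta\mathcal D_{r}[F]\,G=e^{2\langle u,\zeta\rangle_{\theta}}\big({}^{\theta}\mathcal D_{r,u}[f]\,g+f\,{}^{\theta}_{u}\mathcal D[g]\big),
\]
which is the first assertion. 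Integrating over $\Omega$ and using the classical Stokes theorem on the exact form on the left converts the left member into $\int_{\partial\Omega}f\,\nu_{u}^{\theta}\,g$ and the right member into $\int_{\Omega}(\cdots)\,d\lambda_{u}^{\theta}$, which is \eqref{STOkes_alpha_psi}.

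The Borel--Pompeiu formula follows identically, this time factoring the kernel: by additivity of $\langle u,\,\cdot\,\rangle_{\theta}$ one writes $K^{\theta}_{u}(\zeta-z)=e^{-\langle u,z\rangle_{\theta}}\,e^{\langle u,\zeta\rangle_{\theta}}K_{\theta}(\zeta-z)$. Moving the scalar factor $e^{\langle u,\zeta\rangle_{\theta}}$ across $\sigma_{\zeta}^{\theta}$ and across $K_{\theta}$, and applying the intertwining identities to the two volume terms, every integrand collapses onto the corresponding term of the classical formula \eqref{ecua4} for $F=e^{\langle u,\zeta\rangle_{\theta}}f$ and $G=g\,e^{\langle u,\zeta\rangle_{\theta}}$, with a common front factor $e^{-\langle u,z\rangle_{\theta}}$ that, since $z$ is fixed, leaves all the integrals. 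Because $F,G\in C^{1}(\Omega,\mathbb H)\cap C(\overline{\Omega},\mathbb H)$, \eqref{ecua4} then produces the bracket as $F(z)+G(z)=e^{\langle u,z\rangle_{\theta}}(f(z)+g(z))$ for $z\in\Omega$ and $0$ for $z\in\mathbb H\setminus\overline{\Omega}$; the outer factor $e^{-\langle u,z\rangle_{\theta}}$ then cancels to give $f(z)+g(z)$ and $0$ respectively.

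I expect the only genuinely delicate point to be the bookkeeping of left versus right multiplication. Since ${}^\theta\mathcal D$ carries its quaternionic coefficients $\psi_{k}$ on the left while ${}^\theta\mathcal D_{r}$ carries them on the right, one must check that in each intertwining identity the factor $u$ is produced on the correct side, and that throughout the reduction the $\mathbb H$-valued objects $K_{\theta}$ and $\sigma_{\zeta}^{\theta}$ are never commuted past one another; only the scalar weight $e^{\langle u,\,\cdot\,\rangle_{\theta}}$ is ever moved. This is exactly why the computation goes through: $e^{\langle u,\,\cdot\,\rangle_{\theta}}$ is real-valued, hence central, so it may be transferred between the two sides of $\sigma_{\zeta}^{\theta}$ and absorbed into either argument without disturbing the noncommutative part.
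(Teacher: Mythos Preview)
Your proposal is correct and follows essentially the same route as the paper: both establish the intertwining identities ${}^{\theta}\mathcal D[e^{\langle u,z\rangle_{\theta}}g]=e^{\langle u,z\rangle_{\theta}}{}^{\theta}_{u}\mathcal D[g]$ and ${}^{\theta}\mathcal D_{r}[e^{\langle u,z\rangle_{\theta}}f]=e^{\langle u,z\rangle_{\theta}}{}^{\theta}\mathcal D_{r,u}[f]$, then apply the classical formulas \eqref{Dif-stokes}, \eqref{Int-stokes}, \eqref{ecua4} to the weighted pair $e^{\langle u,\cdot\rangle_{\theta}}f$, $e^{\langle u,\cdot\rangle_{\theta}}g$. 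Your write-up is in fact somewhat more explicit than the paper's about why the real-valued weight can be moved past $\sigma_{\zeta}^{\theta}$ and $K_{\theta}$, and about the left/right bookkeeping, but the argument is the same.
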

\begin{proof}
Note that    
\begin{align}
{}^{\theta   }\mathcal D[e^{  \langle u,z  \rangle_{\theta   }} f] 
=  &  (  y _0 e^{\langle u ,z  \rangle_{\theta   }} f +  e^{\langle y ,z  \rangle_{\theta   }} \dfrac{\partial f}
{\partial x_0}   )  +  
i(  y _1 e^{\langle u ,z  \rangle_{\theta   }} f +  e^{\langle u ,z  \rangle_{\theta   }} \dfrac{\partial f}
{\partial x _1}  )  \nonumber  \\ 
&  +
i e^{i\theta} j (  y_2 e^{\langle u ,z  \rangle_{\theta   }} f +  e^{\langle u ,z  \rangle_{\theta   }} \dfrac{\partial f}
{\partial x _2}   ) +
e^{i\theta} j (   y _3 e^{\langle u ,z  \rangle_{\theta   }} f +  e^{\langle u ,z  \rangle_{\theta   }} \dfrac{\partial f}
{\partial x _3} )  \nonumber \\
  \label{exponential0}=  &   u   e^{\langle u ,z  \rangle_{\theta   }} \ f +  e^{\langle u ,z  \rangle_{\theta   }} \ 
	{}^{\theta   }\mathcal D[f] = e^{\langle u ,z  \rangle_{\theta   }}  \  {}^{\theta   }_{u}\mathcal D[f],
\end{align}
 for all  $f \in C^1( {\Omega}, \mathbb H)$, 
where $u =   y  _0 +  y  _1 i +  y  _2 ie^{i\theta} j +  y  _3  e^{i\theta} j $  and 
$z =    x   _0 +   x   _1 i +   x   _2 ie^{i\theta} j +   x   _3  e^{i\theta} j $ with  
$ y_k , x_k \in \mathbb R$ for all $k$. 

The following identity is established by analogous reasoning   
\begin{align}
\label{exponential}{}^{\theta}\mathcal D_r[e^{  \langle u ,z \rangle_{\theta}} f] =   e^{  \langle u ,z \rangle_{\theta}} 
  \  {}^{\theta}\mathcal D_{r,u}[f],
\end{align}
for all  $f \in C^1( {\Omega}, \mathbb H)$. 

The proof is completed by applying formulas \eqref{Dif-stokes}, \eqref{Int-stokes} and \eqref{ecua4} to $e^{  \langle u ,z \rangle_{\theta}} f$, $e^{  \langle u ,z \rangle_{\theta}} g $ and making use of \eqref{exponential0} and \eqref{exponential}. \end{proof}

\begin{corollary} Cauchy-type formula for $(\theta, u)-$hyperholomorphic functions. 
\begin{align}\label{Cauchy-for-u-theta-hyper}     \int_{\partial \Omega }    K^{\theta   }_{u }( \zeta   -z )\sigma_{ \zeta   }^{\theta   } f( \zeta   )   =      \left\{ \begin{array}{ll}  f(z )   , &  z \in \Omega,  \\ 0 , &  z \in \mathbb H\setminus\overline{\Omega} ,                    \end{array} \right. 
		\end{align}
		for all $f\in 
 {}_{u}^{\theta   }\mathfrak M(\Omega) $. 
\end{corollary}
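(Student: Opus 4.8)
The plan is to obtain this Cauchy-type representation as an immediate specialization of the Borel--Pompieu formula \eqref{ecuaNEWCauchw } just established in Proposition \ref{Stokes-Borel-Pompieu}. The whole point of having proved the two-sided Borel--Pompieu identity for a pair $(f,g)$ is that setting one of the two functions to zero collapses it to the corresponding one-sided reproducing formula, so no new computation is required beyond checking which terms survive.

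Concretely, I would start from \eqref{ecuaNEWCauchw } and put $g \equiv 0$. This instantly annihilates the two $g$-dependent contributions: the boundary term $g(\zeta)\,\sigma_{\zeta}^{\theta} K^{\theta}_{u}(\zeta - z)$ vanishes because $g=0$, and the volume term ${}^{\theta}\mathcal D_{r,u}[g](\zeta)\,K^{\theta}_{u}(\zeta - z)$ vanishes because ${}^{\theta}\mathcal D_{r,u}[0]=0$. Simultaneously the right-hand side reduces from $f(z)+g(z)$ to simply $f(z)$ on $\Omega$, while remaining $0$ on $\mathbb H\setminus\overline{\Omega}$.

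Next I would invoke the hypothesis $f \in {}_{u}^{\theta}\mathfrak M(\Omega)$. By the very definition ${}_{u}^{\theta}\mathfrak M(\Omega,\mathbb H)= C^1(\Omega,\mathbb H)\cap \mathrm{Ker}\,{}^{\theta}_{u}\mathcal D$, this means ${}^{\theta}_{u}\mathcal D[f]\equiv 0$ on $\Omega$. Hence the remaining volume integrand $K^{\theta}_{u}(\zeta-z)\,{}^{\theta}_{u}\mathcal D[f](\zeta)$ is identically zero, so the entire $\int_{\Omega}(\cdots)\,d\mu_{\zeta}$ term drops out. What is left is exactly
\[
\int_{\partial \Omega} K^{\theta}_{u}(\zeta - z)\,\sigma_{\zeta}^{\theta} f(\zeta)
=
\begin{cases} f(z), & z\in\Omega,\\ 0, & z\in\mathbb H\setminus\overline{\Omega},\end{cases}
\]
which is the claimed formula \eqref{Cauchy-for-u-theta-hyper}.

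There is no substantive obstacle here; the only point deserving a word of care is the regularity needed to apply Proposition \ref{Stokes-Borel-Pompieu}, which requires $f\in C^1(\Omega,\mathbb H)\cap C(\overline{\Omega},\mathbb H)$. Thus strictly speaking one should either assume $f$ extends continuously to $\overline{\Omega}$, or apply the identity on an exhausting family of slightly smaller subdomains $\Omega_\varepsilon\Subset\Omega$ (on whose boundaries $f$ is automatically $C^1$) and pass to the limit. For the purposes of the corollary this is routine, and the reproducing property of $K^{\theta}_{u}$ announced at the end of Proposition \ref{Stokes-Borel-Pompieu} is precisely what is being made explicit.
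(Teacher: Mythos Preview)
Your proof is correct and matches the paper's own approach: the corollary is stated there as a direct consequence of the Borel--Pompieu formula \eqref{ecuaNEWCauchw }, which is exactly what you do by setting $g\equiv 0$ and using ${}^{\theta}_{u}\mathcal D[f]=0$. Your remark on the boundary regularity is a reasonable precaution but goes slightly beyond what the paper spells out.
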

\begin{proof}
It is a direct consequence from \eqref{ecuaNEWCauchw }.  
\end{proof}

Let $f \in\mathcal L_2(\Omega,\mathbb H)\cup C(\Omega,\mathbb H)$. Consider the operator 
$${}^\theta_{u} T [f](z):=\int_{\Omega} K^{\theta   }_{u } (\zeta-z)f(\zeta) d{\mu}_\zeta.$$
	
\begin{proposition}\label{Dinver}
${}^\theta_{u}\mathcal D  \circ  {}^\theta_{u} T  = I$.
\end{proposition}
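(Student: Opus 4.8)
The plan is to reduce the claim to the already-established classical inversion ${}^\theta\mathcal D \circ {}^\theta T = I$ together with the intertwining identity \eqref{exponential0}. The crucial preliminary observation is that $\langle u, z\rangle_\theta = \sum_{k=0}^3 u_k z_k$ is \emph{real-valued}, so that $e^{\langle u, z\rangle_\theta}$ is a positive real scalar which commutes with every quaternion; moreover, since the form is additive in its second slot, $\langle u, \zeta - z\rangle_\theta = \langle u, \zeta\rangle_\theta - \langle u, z\rangle_\theta$, whence the kernel factorizes as $K^\theta_u(\zeta - z) = e^{-\langle u, z\rangle_\theta}\, e^{\langle u, \zeta\rangle_\theta} K_\theta(\zeta - z)$.

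First I would use this factorization to express ${}^\theta_u T$ in terms of ${}^\theta T$. Pulling the $z$-dependent scalar out of the integral and absorbing the $\zeta$-dependent scalar into $f$, one obtains, for $F := {}^\theta_u T[f]$,
\begin{equation*}
F(z) = e^{-\langle u, z\rangle_\theta}\, {}^\theta T\big[e^{\langle u, \cdot\rangle_\theta} f\big](z),
\end{equation*}
or equivalently $e^{\langle u, z\rangle_\theta} F(z) = {}^\theta T\big[e^{\langle u, \cdot\rangle_\theta} f\big](z)$. This step is purely formal because all exponential factors are real scalars and hence move freely past the quaternionic kernel $K_\theta$ and the values of $f$.

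Next I would apply the operator ${}^\theta\mathcal D$ (in the variable $z$) to both sides of the last identity. On the right-hand side, the known relation ${}^\theta\mathcal D \circ {}^\theta T = I$ gives $e^{\langle u, z\rangle_\theta} f(z)$. On the left-hand side, the intertwining identity \eqref{exponential0}, applied with $h = F$, yields ${}^\theta\mathcal D\big[e^{\langle u, \cdot\rangle_\theta} F\big](z) = e^{\langle u, z\rangle_\theta}\, {}^\theta_u\mathcal D[F](z)$. Equating the two expressions and cancelling the nowhere-vanishing scalar $e^{\langle u, z\rangle_\theta}$ produces ${}^\theta_u\mathcal D[F] = f$, that is, ${}^\theta_u\mathcal D \circ {}^\theta_u T = I$.

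I do not expect a genuine obstacle here: the entire argument is an algebraic conjugation of the classical right inverse by the scalar weight $e^{\langle u, \cdot\rangle_\theta}$, and the only points requiring care are bookkeeping ones, namely confirming that $\langle u, \cdot\rangle_\theta$ is real (so the weight commutes and can be cancelled) and that the differentiation under the integral sign needed to legitimize applying ${}^\theta\mathcal D$ is already subsumed in the validity of ${}^\theta\mathcal D \circ {}^\theta T = I$ on the class $\mathcal L_2(\Omega, \mathbb H) \cup C(\Omega, \mathbb H)$.
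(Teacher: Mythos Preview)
Your proposal is correct and is essentially the same argument as the paper's: both proofs conjugate the classical identity ${}^\theta\mathcal D\circ{}^\theta T = I$ by the real scalar weight $e^{\langle u,\cdot\rangle_\theta}$, using the factorization $K^\theta_u(\zeta-z)=e^{-\langle u,z\rangle_\theta}e^{\langle u,\zeta\rangle_\theta}K_\theta(\zeta-z)$ together with the intertwining relation \eqref{exponential0}, then cancel the nonvanishing exponential. The paper compresses these steps into a single displayed chain of equalities, but the logic is identical to yours.
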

 \begin{proof}
As 
${}^\theta\mathcal D  \circ  {}^\theta T  = I $ on $\mathcal L_2(\Omega,\mathbb H)\cup C(\Omega,\mathbb H)$  then  using \eqref{exponential0} we obtain  
\begin{align*}
 e^{  \langle u ,z \rangle_{\theta   }} f(z)  = & {}^\theta\mathcal D  [  e^{  \langle u ,z \rangle_{\theta   }}  \int_{\Omega}  e^{  -\langle u , z \rangle_{\theta   }} K^{\theta   }  (\zeta-z)
   e^{  \langle u ,\zeta \rangle_{\theta   }} f  (\zeta)  d  {\mu}_\zeta  ] \\
	 = & e^{  \langle u ,z \rangle_{\theta   }}{}^\theta_{u}\mathcal D  [    \int_{\Omega}  e^{   \langle u , \zeta- z \rangle_{\theta   }} K^{\theta   }  (\zeta-z)
     f  (\zeta)  d  {\mu}_\zeta  ] ,
\end{align*}
for all $f\in \mathcal L_2(\Omega,\mathbb H)\cup C(\Omega,\mathbb H)$.
\end{proof}

\begin{remark}\label{rem8}
Let $f=f_1+f_2 j$, where $f_1,f_2: \Omega\to \mathbb C$ and define 
\begin{align*} 
{}^\theta_{u} T_1 (f_1, f_2) (z) = & \int_{\Omega} \frac{ e^{  \langle u ,z \rangle_{\theta   }}  }{   2\pi^2 |\zeta-z|^4}
 \left[    (\bar \zeta_1 - \bar z_1) f_1( \zeta) +i e^{-i\theta}  (\bar \zeta_2 - \bar z_2) \bar f_2( \zeta)  \right] d\mu_{\zeta}\\ 
& \\
{}^\theta_{u} T_2 (f_1, f_2) (z) = & \int_{\Omega} \frac{ e^{  \langle u ,z \rangle_{\theta   }}  }{   2\pi^2 |\zeta-z|^4}
 \left[    (\bar \zeta_1 - \bar z_1) f_2( \zeta) - i e^{-i\theta}  (\bar \zeta_2 - \bar z_2) \bar f_1( \zeta)  \right]d\mu_{\zeta},
\end{align*}
 where $\zeta =\zeta_1+ i e^{i\theta} j \zeta_2, \ 
 z =z_1+ i e^{i\theta} j z_2 \in \Omega$, with $\zeta_k,z_k\in\mathbb C$ for $k=1,2$. 

Therefore, ${}^\theta_{u} T  (f)  = {}^\theta_{u} T_1 (f_1, f_2) + {}^\theta_{u} T_1 (f_1, f_2) j$ for  all $f=f_1+ f_2 j \in \mathcal L_2(\Omega,\mathbb H)\cup C(\Omega,\mathbb H)$ and the previous proposition becomes 
\begin{align*}
\frac{\partial}{\partial \bar z_1} {}^\theta_{u} T_1 (f_1, f_2) - ie^{i\theta} \frac{\partial}{\partial   z_2} \overline{ {}^\theta_{u} T_2 (f_1, f_2)}   & = f_1
\\
\frac{\partial}{\partial \bar z_1} {}^\theta_{u} T_2 (f_1, f_2) + ie^{i\theta} \frac{\partial}{\partial   z_2} \overline{ {}^\theta_{u} T_1 (f_1, f_2)}   & = f_2,
\end{align*} 
on $\Omega$.
\end{remark}
	
\begin{proposition}\label{ProCon} (Chain rule or the conformal co-variant property of ${}^{{     \theta   }}_{{u}}\mathcal D$)
Given  $u , v \in\mathbb H$ and $\Omega, \Xi \subset \mathbb H$ two conformal equivalent domains; i.e., $T(\Xi)=\Omega$, where $T$ is given by \eqref{e4} and let $\zeta=T(z)\in \Omega$ for $z\in \Xi$. Then  
\begin{align*}
{}^{\theta   }_{u }\mathcal D_z   [e^{  \langle  v  - u  , z \rangle_{\theta }} A_T f\circ T ] =    
  e^{  \langle  v  - u  , z \rangle_{\theta }}   (B_T\circ T)  (   {}^{\theta }_{\delta_T} \mathcal D_{\zeta} [   f] ) \circ T   , \quad \forall f\in C^1(\Omega,\mathbb H),
\end{align*}
where the index $z$ of 
${}^{\theta   }_{u }\mathcal D_z$ denotes the quaternionic variable of differentiation, the function $\delta_T=  (B_T\circ T)^{-1}  v  (A_T\circ T^{-1} ) \quad \textrm{on} \ \Omega$, with $(B_T\circ T)^{-1}$ being the  multiplicative inverse of $B_T\circ T$ and $T^{-1}$  the inverse mapping of $T$ 
and ${}^{\theta }_{\delta_T} \mathcal D_{\zeta} [f] :=  {}^{\theta } \mathcal D_{\zeta} [f] + {\delta_T}f$. 
\end{proposition}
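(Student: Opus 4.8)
The plan is to reduce the statement to two facts already available in the excerpt: the exponential conjugation identity \eqref{exponential0}, which converts ${}^\theta_u\mathcal D$ into ${}^\theta\mathcal D$ after stripping off an exponential weight, and the chain rule \eqref{e7} for the plain operator ${}^\theta\mathcal D$. The only genuinely new ingredient is the algebraic role of the inner weight $\delta_T$, which is designed precisely to absorb the ``leftover'' zeroth-order term produced along the way. Throughout I will use the crucial observation that $\langle\cdot,\cdot\rangle_\theta$ is $\mathbb R$-valued, so $e^{\langle v-u,z\rangle_\theta}$ is a real scalar and hence commutes with every quaternion; this is what lets me move it freely past the factors $u$ and $v$.

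First I would abbreviate $h:=A_T\,f\circ T$ and compute ${}^\theta_u\mathcal D_z[e^{\langle v-u,z\rangle_\theta}h]$ from its definition, namely ${}^\theta\mathcal D_z[e^{\langle v-u,z\rangle_\theta}h]+u\,e^{\langle v-u,z\rangle_\theta}h$. To the first summand I apply \eqref{exponential0} with parameter $v-u$ in place of $u$, obtaining $e^{\langle v-u,z\rangle_\theta}\bigl({}^\theta\mathcal D_z[h]+(v-u)h\bigr)$. Pulling the real scalar out of the second summand and collecting the zeroth-order terms, the left coefficients $(v-u)$ and $u$ add to $v$, so that
\[
{}^\theta_u\mathcal D_z[e^{\langle v-u,z\rangle_\theta}h]=e^{\langle v-u,z\rangle_\theta}\bigl({}^\theta\mathcal D_z[h]+v\,h\bigr).
\]
Next I insert the chain rule \eqref{e7}, ${}^\theta\mathcal D_z[A_T\,f\circ T]=(B_T\circ T)\,{}^\theta\mathcal D_\zeta[f]\circ T$, which disposes of the first-order part and leaves
\[
{}^\theta_u\mathcal D_z[e^{\langle v-u,z\rangle_\theta}h]=e^{\langle v-u,z\rangle_\theta}\Bigl((B_T\circ T)\,{}^\theta\mathcal D_\zeta[f]\circ T+v\,A_T\,f\circ T\Bigr).
\]

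It then remains to recognize the bracket as $(B_T\circ T)\,({}^\theta_{\delta_T}\mathcal D_\zeta[f])\circ T$. Expanding the target by ${}^\theta_{\delta_T}\mathcal D_\zeta[f]={}^\theta\mathcal D_\zeta[f]+\delta_T f$, the first-order pieces match verbatim, so the identity collapses to the pointwise relation $v\,A_T(z)=B_T(T(z))\,\delta_T(T(z))$. Reading $\delta_T$ as the function on $\Omega$ given by $\delta_T=(B_T\circ T)^{-1}v\,(A_T\circ T^{-1})$, evaluation at $\zeta=T(z)$ yields $\delta_T(T(z))=B_T(T(z))^{-1}v\,A_T(z)$, since $(A_T\circ T^{-1})(T(z))=A_T(z)$; left-multiplying by $B_T(T(z))$ collapses $B_T(T(z))B_T(T(z))^{-1}$ to $1$ and returns exactly $v\,A_T(z)$. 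Substituting back and factoring $B_T\circ T$ out on the left produces the asserted formula.

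I expect the main obstacle to be the noncommutative bookkeeping rather than any deep idea: one must keep $u$, $v$, $A_T$, $B_T$ and $\delta_T$ in their correct left/right positions, confirm that the exponential weight really is scalar so it may be commuted, and handle the composition conventions in the definition of $\delta_T$ so that $(B_T\circ T)^{-1}$ and $A_T\circ T^{-1}$ are evaluated at the matching point $\zeta=T(z)$. The linchpin is the single algebraic identity $B_T(T(z))\,\delta_T(T(z))=v\,A_T(z)$, which is exactly what dictates the definition of $\delta_T$; once it is secured, everything else is an assembly of \eqref{exponential0} and \eqref{e7}.
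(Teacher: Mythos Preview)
Your proof is correct and follows essentially the same route as the paper's own argument: expand ${}^\theta_u\mathcal D$ from its definition, apply \eqref{exponential0} with parameter $v-u$ to the exponential factor, combine the zeroth-order pieces $(v-u)+u=v$, invoke the chain rule \eqref{e7}, and then factor out $B_T\circ T$ using the defining identity for $\delta_T$. Your write-up is in fact a bit more explicit than the paper's about the commutativity of the real exponential weight and about the pointwise verification $B_T(T(z))\,\delta_T(T(z))=v\,A_T(z)$, but the underlying mechanism is identical.
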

\begin{proof} 
From \eqref{e7} we have
\begin{align*}
  {}^{\theta   }_{u }\mathcal D_z  [e^{  \langle  v  - u  , z \rangle_{\theta }} A_T f\circ T ]  
= & u   e^{  \langle  v  - u  , z \rangle_{\theta }} A_T f\circ T    +  
{}^{\theta   }\mathcal D_z [   e^{  \langle  v - u  , z \rangle_{\theta }} A_T f\circ T  ] 
 \\
= &   
u  e^{  \langle  v  - u  , z \rangle_{\theta }} A_T f\circ T       + 
 (v  - u )e^{  \langle  v  - u  , z \rangle_{\theta }} \ A_T f\circ T \\
&  + e^{  \langle  v  - u  , z \rangle_{\theta }}  \ {}^{\theta } \mathcal D_z [ A_T f\circ T ]   \\
= &  
  e^{  \langle  v  - u  , z \rangle_{\theta }}  v  A_T f\circ T    +   e^{  \langle  v  - u  , z \rangle_{\theta }}  (B_T\circ T)  {}^{\theta }  \mathcal D_{\zeta} [   f] \circ T   \\
= &   
  e^{  \langle  v  - u  , z \rangle_{\theta }}  (B_T \circ T )  \left\{    (B_T\circ T)^{-1}  v  (A_T\circ T^{-1} )  f    + {}^{\theta } \mathcal D_\zeta [   f]  \right\} \circ T.
\end{align*} 
\end{proof}

\begin{corollary}\label{isoBergman}
Denote ${}_{\delta_T}^{ \phi} \mathfrak M(\Omega)$ the quaternionic right linear space of $f\in C^1(\Omega,\mathbb H)$ such that ${}^{\phi}\mathcal D[f] + \delta_T f = 0$ on $\Omega$. Then 
\begin{align}\label{conformCov}f\in  {}_{\delta_T}^{ \theta} \mathfrak M(\Omega)
    \   \Longleftrightarrow  \ e^{  \langle  v - u, z\rangle_{\phi}} A_T f\circ T   \in   {}_{u}^{ \theta   } \mathfrak M(\Xi) \end{align}
	 or equivalently $f=f_1+ f_2j$ we see that   
\begin{align*}
  &  \left\{
\begin{array}{l}
\displaystyle{\frac{\partial f_1}{\partial \bar \zeta  _1} + \delta_1 f_1 =
ie^{i\theta}\overline{(\frac{\partial f_2
}{\partial \bar \zeta  _2}+ \delta_2 f_2) }},\\
\\
\displaystyle{\frac{\partial f_1}{\partial \bar \zeta  _2} + \delta_2 f_1 =
-ie^{i\theta}\overline{(\frac{\partial f_2}{\partial \bar \zeta  _1} + \delta_1f_2 ) }},
\end{array} \right. 
\end{align*} 
hold on $\Omega$ if and only if  
\begin{align*} 
	\left\{
\begin{array}{l}
\displaystyle{\frac{\partial   g_1}{\partial \bar z_1} + u_1   g_1 =
ie^{i\theta}\overline{(\frac{\partial    g_2
}{\partial \bar z_2}+ u_2   g_2) }},\\
\\
\displaystyle{\frac{\partial   g_1}{\partial \bar z_2} + u_2    g_1 =
-ie^{i\theta}\overline{(\frac{\partial  
 g_2}{\partial \bar z_1} + u_1  g_2 ) }},
\end{array} \right.
\end{align*}
on $\Xi$, where $u =  u_1   + i
e^{i\theta}j u_2   $ with $u_1,u_2\in \mathbb C$, 
  $\delta_T(\zeta) =  \delta_1 (\zeta) + i
e^{i\theta}j \delta_2(\zeta)  $   
   for all $\zeta\in \Omega$
 with $\delta_1,\delta_2:\Omega\to \mathbb C$,  
and $ e^{  \langle  v-u, z  \rangle_{\theta }} A_T(z) f\circ T(z) =g_1(z)+g_2(z) j $ 
   for all $z\in \Xi$ with $g_1,g_2:\Xi\to \mathbb C$.
 \end{corollary}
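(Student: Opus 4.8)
The plan is to obtain this statement as an essentially immediate consequence of Proposition \ref{ProCon}, so that almost no fresh computation is needed; throughout I read the superscript $\phi$ in the statement as the fixed $\theta$. First I would recall the identity proved there,
$$
{}^{\theta}_{u}\mathcal D_z\bigl[e^{\langle v-u,z\rangle_{\theta}} A_T\, f\circ T\bigr] = e^{\langle v-u,z\rangle_{\theta}}\,(B_T\circ T)\left({}^{\theta}_{\delta_T}\mathcal D_{\zeta}[f]\right)\circ T,
$$
valid for every $f\in C^1(\Omega,\mathbb H)$, with $\delta_T = (B_T\circ T)^{-1}\,v\,(A_T\circ T^{-1})$. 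By the very definition of the classes involved, the condition $e^{\langle v-u,z\rangle_{\theta}} A_T\,f\circ T\in{}_{u}^{\theta}\mathfrak M(\Xi)$ says precisely that the left-hand side vanishes identically on $\Xi$, while $f\in{}_{\delta_T}^{\theta}\mathfrak M(\Omega)$ says that ${}^{\theta}_{\delta_T}\mathcal D_{\zeta}[f]=0$ on $\Omega$. Thus the whole content of \eqref{conformCov} is that the vanishing of the left side is equivalent to the vanishing of the right side of the displayed identity.

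The only point requiring care is the cancellation of the two factors multiplying $({}^{\theta}_{\delta_T}\mathcal D_{\zeta}[f])\circ T$ on the right, since this is what secures the nontrivial (backward) implication. Here I would observe that $\langle v-u,z\rangle_{\theta}$ is real, so $e^{\langle v-u,z\rangle_{\theta}}$ is a strictly positive scalar, hence nonzero and central; and that $B_T\circ T$ is a pointwise-invertible $\mathbb H$-valued function, because its closed form in \eqref{e6} is either the nonzero constant $\bar a$ (when $c=0$, using $ad\neq0$) or the product $-\bar c\,\overline{(\zeta-ac^{-1})}\,|\zeta-ac^{-1}|^4$ of nonvanishing quaternions (when $c\neq0$, the point $\zeta=ac^{-1}$ lying outside the image of $T$). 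Multiplying the identity on the left by $(B_T\circ T)^{-1}e^{-\langle v-u,z\rangle_{\theta}}$ then shows that its left side vanishes on $\Xi$ if and only if $({}^{\theta}_{\delta_T}\mathcal D_{\zeta}[f])\circ T=0$ on $\Xi$, and since $T:\Xi\to\Omega$ is a bijection this is in turn equivalent to ${}^{\theta}_{\delta_T}\mathcal D_{\zeta}[f]=0$ on $\Omega$. This settles both implications of \eqref{conformCov} simultaneously.

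It then remains only to transcribe the equivalence into components. Writing $f=f_1+f_2 j$ with $f_1,f_2:\Omega\to\mathbb C$ and $\delta_T=\delta_1+ie^{i\theta}j\,\delta_2$, I would apply the complex characterization \eqref{e1} of membership in a $(\theta,\cdot)$-hyperholomorphic class, with $\delta_T$ in the role of the quaternionic parameter, to rewrite ${}^{\theta}_{\delta_T}\mathcal D_{\zeta}[f]=0$ as the first system on $\Omega$. Symmetrically, setting $g:=e^{\langle v-u,z\rangle_{\theta}} A_T\,f\circ T=g_1+g_2 j$ and $u=u_1+ie^{i\theta}j\,u_2$, the same characterization turns ${}^{\theta}_{u}\mathcal D_z[g]=0$ into the second system on $\Xi$. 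Since the two differential conditions have just been shown to be equivalent, so are the two systems, which is exactly the asserted component form. The main (and in fact only) obstacle is the invertibility bookkeeping for $B_T\circ T$ that makes the backward implication rigorous; everything else is a direct reading-off from Proposition \ref{ProCon} and \eqref{e1}.
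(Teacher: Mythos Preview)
Your proposal is correct and follows exactly the approach the paper intends: the corollary is stated without proof, as an immediate consequence of Proposition~\ref{ProCon}, and your argument spells out precisely that deduction---cancelling the nonvanishing factors $e^{\langle v-u,z\rangle_{\theta}}$ and $B_T\circ T$ in the chain-rule identity and then transcribing both sides into complex components via~\eqref{e1}. If anything, you supply more detail than the paper does, in particular the invertibility check for $B_T\circ T$ and the use of the bijectivity of $T$, which are the only points where the ``immediate'' passage actually requires justification.
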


\begin{proposition}\label{functionaValuation}
 Let $\Omega\subset\mathbb H$ be a bounded domain. 
 \begin{enumerate}
\item If   $f  \in {}^\theta   _{u }\mathfrak M(\Omega) $ and  $g  \in  {}^\theta\mathfrak M_{r,u }(\Omega) $ then 
 \begin{align*}  &  \int_{\partial \Omega }  ( K^{\theta   }_{u }( \zeta  -z )\sigma_{ \zeta  }^{\theta   } f( \zeta  )  +  g( \zeta  )   \sigma_{ \zeta  }^{\theta   } K^{\theta   }_{u }( \zeta  -z ) ) \nonumber  = 
		\left\{ \begin{array}{ll}  f(z ) + g(z ) , &  z \in \Omega,  \\ 0 , &  z \in \mathbb H\setminus\overline{\Omega} .                   \end{array} \right. 
\end{align*}  
\item If $z \in \Omega$ and $\epsilon>0$ are  such that $\overline{\mathbb B(z ,\epsilon)}\subset \Omega$ then there exists a constant $k_{\epsilon}>0$ such that    \begin{align*}    |f(z )| \leq    k_{\epsilon} 	\left(\int_{  \mathbb B(z ,\epsilon) } |f(\zeta  )|^2 d\mu_{\zeta}\right)^{\frac{1}{2}},
\end{align*} 
for all  $f  \in {}^\theta   _{u}\mathfrak M(\Omega) $.
\end{enumerate} 
\end{proposition}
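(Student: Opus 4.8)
The plan is to handle the two parts separately: the first is essentially a specialization of the Borel--Pompeiu identity, while the second is the quaternionic analogue of the classical Bergman point--evaluation estimate, obtained from the Cauchy reproducing formula by a solid mean--value argument. For the first assertion I would simply insert the hypotheses into \eqref{ecuaNEWCauchw }. If $f\in{}^\theta_u\mathfrak M(\Omega)$ then ${}^\theta_u\mathcal D[f]\equiv 0$, and if $g\in{}^\theta\mathfrak M_{r,u}(\Omega)$ then ${}^\theta\mathcal D_{r,u}[g]\equiv 0$, so the entire volume integral in \eqref{ecuaNEWCauchw } vanishes and only the boundary integral survives, giving exactly the stated dichotomy according to whether $z\in\Omega$ or $z\in\mathbb H\setminus\overline\Omega$. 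The one delicate point is regularity: \eqref{ecuaNEWCauchw } is stated for $f,g\in C^1(\Omega,\mathbb H)\cap C(\overline\Omega,\mathbb H)$, so I would either assume $f,g$ extend continuously to $\overline\Omega$ or exhaust $\Omega$ by smooth subdomains and pass to the limit.

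For the second assertion the strategy is to localize to balls on which $f$ is hyperholomorphic up to the boundary. Since $\overline{\mathbb B(z,\epsilon)}\subset\Omega$, for every $0<r\le\epsilon$ the function $f$ is $C^1$ on $\overline{\mathbb B(z,r)}$ and $(\theta,u)$-hyperholomorphic there, so the Cauchy-type formula \eqref{Cauchy-for-u-theta-hyper} applies over $\mathbb B(z,r)$ and yields $f(z)=\int_{\partial\mathbb B(z,r)}K^\theta_u(\zeta-z)\sigma^\theta_\zeta f(\zeta)$. Taking moduli and using submultiplicativity of $|\cdot|$ then reduces everything to two elementary estimates on the sphere $\{|\zeta-z|=r\}$.

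The first estimate is the radial size of the kernel: on $\{|\zeta-z|=r\}$ one computes $|(\bar\zeta_1-\bar z_1)-ie^{-i\theta}(\bar\zeta_2-\bar z_2)j|^2=|\zeta_1-z_1|^2+|\zeta_2-z_2|^2=r^2$, whence $|K_\theta(\zeta-z)|=\tfrac{1}{2\pi^2 r^3}$, while the weight $\langle u,\zeta-z\rangle_\theta$ is real and controlled by $e^{\langle u,\zeta-z\rangle_\theta}\le e^{|u|\epsilon}$ through Cauchy--Schwarz in $\mathbb R^4$, so $|K^\theta_u(\zeta-z)|\le e^{|u|\epsilon}/(2\pi^2 r^3)$. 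The second is that the quaternionic area form has scalar modulus $|\sigma^\theta_\zeta|=dS$, because its quaternionic coefficient is the unit-modulus normal expressed in the orthonormal structural set $\psi$. Combining these gives $|f(z)|\le \tfrac{e^{|u|\epsilon}}{2\pi^2 r^3}\int_{\partial\mathbb B(z,r)}|f|\,dS$ for all $0<r\le\epsilon$; multiplying by $2\pi^2 r^3$, integrating in $r$ over $(0,\epsilon)$, and using polar coordinates turns the right side into $\int_{\mathbb B(z,\epsilon)}|f|\,d\mu$, and a final Cauchy--Schwarz step bounds $|f(z)|$ by a constant $k_\epsilon$ (depending only on $\epsilon$ and $|u|$) times $\bigl(\int_{\mathbb B(z,\epsilon)}|f|^2\,d\mu\bigr)^{1/2}$, as claimed.

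The main obstacle is not the final assembly but the two geometric facts feeding it: verifying that the $(\theta,u)$-Cauchy kernel has the clean radial modulus $1/(2\pi^2 r^3)$ on spheres centered at $z$, and that $|\sigma^\theta_\zeta|$ coincides with ordinary $3$-dimensional surface measure. Both rest on the orthonormality of $\psi=(1,i,ie^{i\theta}j,e^{i\theta}j)$; once they are established, the passage from the surface mean value to the solid ($L^1$ and then $L^2$) estimate is the routine coarea-plus-Cauchy--Schwarz argument familiar from classical Bergman space theory, and it is exactly this estimate that will later make point evaluation a bounded functional and guarantee the existence of the reproducing kernel.
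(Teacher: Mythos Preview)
Your argument is correct. Part 1 is handled exactly as in the paper: substitute into \eqref{ecuaNEWCauchw } and watch the volume terms vanish.

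For part 2 you take a genuinely different route from the paper. The paper applies the Cauchy formula \eqref{Cauchy-for-u-theta-hyper} only on the single sphere $\partial\mathbb B(z,\epsilon)$, pulls the constant factor $1/(2\pi^2\epsilon^4)$ out of the kernel, and then uses the Stokes formula \eqref{STOkes_alpha_psi} (together with ${}^\theta_u\mathcal D[f]=0$) to rewrite the remaining boundary integral as a volume integral over $\mathbb B(z,\epsilon)$ of a continuous function times $f$; a single Cauchy--Schwarz then finishes. You instead apply Cauchy on every sphere of radius $r\le\epsilon$, bound the kernel pointwise by $e^{|u|\epsilon}/(2\pi^2 r^3)$, use $|\sigma^\theta_\zeta|=dS$, and integrate in $r$ via coarea before applying Cauchy--Schwarz. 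Your approach is the classical solid mean--value argument: it is more elementary (no second appeal to Stokes) and yields an explicit constant $k_\epsilon=\sqrt{2}\,e^{|u|\epsilon}/(\pi\epsilon^2)$ depending only on $\epsilon$ and $|u|$. The paper's approach, by contrast, avoids having to verify the two geometric facts you flag (the radial modulus of the kernel and $|\sigma^\theta_\zeta|=dS$), but leaves the constant as an unspecified sup--norm over the closed ball.
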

\begin{proof}
\begin{enumerate} 
\item  Use \eqref{ecuaNEWCauchw }.   
\item  From \eqref{Cauchy-for-u-theta-hyper} and Stokes integral formula, see \eqref{STOkes_alpha_psi}, we see that 
\begin{align*}    
|f(z )|= &|\frac{ 1}{2\pi^2\epsilon^4}   \int_{  \mathbb B(z ,\epsilon) }    
    {}^\theta\mathcal D   _{r,u} [e^{-  \langle u  , \zeta +z  \rangle_{\theta   }}  (  (\bar\zeta_1- \bar z_1) - i e^{-i\theta}
(\bar\zeta_2-  \bar z_2)j ) ]  f(\zeta ) d\lambda^{\theta   }_u (\zeta )  |\\
			\leq & \frac{ 1}{2\pi^2\epsilon^4}  \int_{  \mathbb B(z ,\epsilon) }  |    
   e^{2  \langle u  , \zeta  \rangle_{\theta   }} {}^\theta\mathcal D   _{r,u } [e^{-  \langle u  , \zeta +z  \rangle_{\theta   }} ( (\bar\zeta_1- \bar z_1) - i e^{-i\theta}
(\bar\zeta_2-  \bar z_2)j ) ]  f(\zeta )  |d\mu_\zeta  ,\\
\leq  &  \frac{ 1}{2\pi^2\epsilon^4}  \left(  \int_{  \mathbb B(z ,\epsilon) }  |    
   e^{2  \langle u  , \zeta  \rangle_{\theta   }} {}^\theta\mathcal D   _{r,u } [e^{-  \langle u  , \zeta +z  \rangle_{\theta   }} ( (\bar\zeta_1- \bar z_1) - i e^{-i\theta}
(\bar\zeta_2-  \bar z_2)j ) ]    |^2 d\mu_\zeta \right)^{\frac{1}{2}}  \\ 
& \hspace{1cm}	\left(  \int_{  \mathbb B(z ,\epsilon) } |f(\zeta )|^2 d\mu_\zeta \right)^{\frac{1}{2}}.
\end{align*}
The mapping $$(\zeta_1, \zeta_2) \mapsto  e^{2  \langle u  , \zeta  \rangle_{\theta   }} {}^\theta\mathcal D   _{r,u } [e^{-  \langle u  , \zeta +z  \rangle_{\theta   }} ( (\bar\zeta_1- \bar z_1) - i e^{-i\theta}
(\bar\zeta_2-  \bar z_2)j ) ]      $$ is a continuous functions on $\mathbb C^2$ thus it is a bounded function on $\overline{\mathbb B(z ,\epsilon)}$ and there exists $M_\epsilon>0$ such that $$| e^{2  \langle u  , \zeta  \rangle_{\theta   }} {}^\theta\mathcal D   _{r,u } [e^{-  \langle u  , \zeta +z  \rangle_{\theta   }} ( (\bar\zeta_1- \bar z_1) - i e^{-i\theta}
(\bar\zeta_2-  \bar z_2)j ) ]  |  < M_{\epsilon},$$ for all $(\zeta_1, \zeta_2 ) \in \mathbb B(z ,\epsilon)$. 
Therefore,  
\begin{align*}    
|f(z )| \leq  &  \frac{ 1}{2\pi^2\epsilon^4} M_{\epsilon} \left( \frac{\pi^2 \epsilon^4}{2}   \right)^{\frac{1}{2}} 	\left(  \int_{  \mathbb B(z ,\epsilon) } |f(\zeta )|^2 d\mu_\zeta \right)^{\frac{1}{2}}.
\end{align*}
Denote $k_{\epsilon}= \dfrac{ M_{\epsilon}}{2\sqrt{2} \pi \epsilon^2}$.
\end{enumerate}

\end{proof}

\begin{definition} \label{FirstBergman}  The quaternionic right linear space 
${}^{\theta}_{u}\mathcal A(\Omega)= {}^{\theta}_{u}\mathfrak M(\Omega)  \cap \mathcal L_2(\Omega, \mathbb H)$ is called $(\theta, u)-$hyperholomorphic Bergman space associated to $\Omega$ and is denoted by  
\begin{align*}
\|f\|_{{}^{     \theta   }_{   u  }\mathcal A(\Omega)}  = \left( \int_{\Omega}|f|^2d\mu  \right)^{\frac{1}{2}},
\end{align*}
for all $f\in {}^{     \theta   }_{   u  }\mathcal A(\Omega)  $.   
\end{definition}

\begin{remark}\label{HilertSpace}
The previous sentence allows to see that the quaternionic right-linear space ${}^{\theta}_{u}\mathcal A(\Omega)$ equipped with the scalar product 
\begin{align*}
\langle  f,g \rangle_{{}^{     \theta   }_{   u   }\mathcal A(\Omega)}=
\int_{\Omega} \bar f g d\mu, \quad \forall f,g\in {}^{     \theta   }_{   u   }\mathcal A(\Omega)
\end{align*}
is a  quaternionic right-Hilbert space and from deeply similar computations to those presented in \cite{GM} and \cite{SV3} we can see that the valuation functional $f\mapsto f(z)$, for $z \in \Omega$, is bounded on $ {}^{\theta}_{u} \mathcal A(\Omega)$. 

Riesz representation theorem for quaternionic right Hilbert space, see \cite{BD},  shows that there exists 
			${}^{     \theta   }_{   u   }B_z\in {}^{     \theta   }_{  u   } \mathcal
A(\Omega)$ such that
$f(z )=\langle {}^{     \theta   }_{   u   }B_z , f \rangle_{{}^{     \theta   }_{   u   }\mathcal A(\Omega)}$ for all 
$f\in {}^{     \theta   }_{   u   }\mathcal A(\Omega)$.    Therefore  the Bergman kernel of $  {}^{     \theta   }_{    u   }\mathcal A(\Omega)$ is    
${}^{     \theta   }_{  u   }\mathcal B_\Omega(z ,\cdot)  =\overline{ {}^{     \theta   }_{   u   }B_z (\cdot)}$ and satisfies: 
  \begin{align*}f(z )=\int_{\Omega} {}^{     \theta   }_{   u   }\mathcal B_{\Omega}(z ,\zeta )f(\zeta)d\mu_{\zeta}, \quad 
\forall f\in {}^{     \theta   }_{   u   }\mathcal A(\Omega) .
\end{align*} 

The  Bergman projection associated to  ${}^{\theta}_{u}\mathcal A(\Omega) $ is         
\begin{align*}
{}^{\theta}_{u} \mathfrak B_{\Omega }[f](z ):=\int_{\Omega}
{}^{\theta}_{u}\mathcal B_{\Omega }(z ,\zeta) f(\zeta) d\mu_\zeta, \quad \forall f\in\mathcal L_{2}(\Omega,\mathbb H).
\end{align*}   
What is more, repeating the reasoning given in \cite{GM, SV3}, we obtain:  
\begin{enumerate}
\item ${}^{\theta}_{u}\mathcal B_\Omega$  is  hermitian. 
\item The mapping	$z\mapsto {}^{\theta}_{u}\mathcal B_\Omega(z, \zeta )$ belongs to 
  ${}_{u }^{  \theta   }\mathfrak M(\Omega)$ for each $\zeta \in\Omega$   and $\zeta \mapsto {}^{\theta}_{u}\mathcal B_\Omega(z, \zeta)$  belongs to   $\mathfrak M_{\bar u}^{\bar \theta}(\Omega)$  for each $z\in\Omega$. 
\item ${}^{\theta}_{u}\mathcal B_\Omega(\cdot, \cdot)$ is the unique function   {with} the previous properties. 
\item ${}^{\theta}_{u}\mathfrak B_\Omega$  is a continuous symmetric operator.
\item ${}^{\theta}_{u}\mathfrak B_\Omega[\mathcal L_2(\Omega,\mathbb H)]={}^{\theta}_{u}\mathcal A(\Omega)$.
\item $({}^{\theta}_{u}\mathfrak B_\Omega)^2={}^{\theta}_{u}\mathfrak B_\Omega$.
\end{enumerate}
\end{remark}

\begin{proposition}\label{ref123456} Given $u,v\in\mathbb H$ and
let $\Omega,\Xi\subset \mathbb H$ be two conformally equivalent  domains    { with } $T(\Xi)= \Omega $,   { where }  $T$    {  is } given by \eqref{e4}. Define   
   ${{}^{     \theta   }_{\delta_T} \mathcal A (\Omega) } :=  {}^{     \theta   }_{\delta_T} \mathfrak M(\Omega) \cap \mathcal L_{2 }(\Omega, \mathbb H)$ equipped  with the inner product inherited from the weighted space $\mathcal L_{2,\gamma_{_T}}(\Omega,\mathbb H)$,  where  $\delta_T$ is defined in Proposition \ref{ProCon}.  
	 
Denote $\gamma_{_T}(z)= e^{-2\langle  v - u , z \rangle_{\theta}} \rho_{_T}(z)$ for all $z\in \Xi$ and define 
$${}^{\theta}_{u } \mathcal A_{\gamma_{_T}} (\Xi) =  {}^{\theta}_{u} \mathfrak M(\Xi) \cap \mathcal L_{2, \gamma_{_T} }(\Xi, \mathbb H)$$ 
equipped  with the inner product inherited from the weighted space $\mathcal L_{2,\gamma_{_T}}(\Xi,\mathbb H)$, i.e.,  
\begin{align*} 
\langle f,g \rangle_{ {}^{     \theta   }_{u} \mathcal A_{\gamma_{_T}} (\Xi)} = \int_{\Xi} \bar f g  \gamma_{_T}d\mu,
\end{align*}
for all $f,g \in {}^{     \theta   }_{u } \mathcal A_{\gamma_{_T}} (\Xi)$.   Then  $ {}^{ e^{\langle v-u, z \rangle_{\theta   }}C_T } M \circ W_T \mid_{{}^{     \theta   }_{\delta_T} \mathcal A (\Omega) }:   
 {{}^{     \theta   }_{\delta_T} \mathcal A (\Omega) } \to   {}^{     \theta   }_{u} \mathcal A_{\gamma_{_T}} (\Xi) $  
    is  an isometric isomorphism of   
quaternionic right-Hilbert spaces, where $C_T$ and $\rho_{_T}$  are  given by \eqref{L_2ISOMOR}.   {What is more,}
  \begin{align}\label{equa123}  {}^\theta   _{u}\mathcal  B_{\Xi, \gamma_{_T}}(z,w )= & e^{\langle v-u, z+ w  \rangle_{\theta   }}    C_T(z) \ {}^\theta   _{\delta_T}\mathcal B_{\Omega }(T(z), T(w ))
   \overline{C_T(w )}, \\
  {}^\theta   _{u}\mathfrak B_{\Xi, \gamma_{_T}} = &  {}^{ e^{\langle v-u,  z \rangle_{\theta   }}C_T } M \circ W_T   \circ {}^\theta   _{\delta_T}\mathfrak B_{\Omega}\circ\left(   {}^{ e^{\langle v-u,  z \rangle_{\theta   }}C_T } M \circ W_T  \right)^{-1}, \nonumber 
\end{align}
   where  ${}^\theta   _{u}\mathcal  B_{\Xi, \gamma_{_T}} $ and ${}^\theta_{\delta_T}\mathcal B_{\Omega }$ are the Bergman kernels of $ {}^{     \theta   }_{u } \mathcal A_{\gamma_{_T}} (\Xi) $ and ${ {}^{     \theta   }_{\delta_T} \mathcal A  (\Omega) }$ respectively, and  $  {}^\theta   _{u}\mathfrak B_{\Xi, \gamma_{_T}}
$ and $ {}^\theta   _{\delta_T}\mathfrak B_{\Omega }$  are the Bergman projections of 
$ {}^{     \theta   }_{u } \mathcal A_{\gamma_{_T}} (\Xi) $ and ${ {}^{     \theta   }_{\delta_T} \mathcal A (\Omega) }$.
\end{proposition}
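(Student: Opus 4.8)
The plan is to realize the operator in the statement as the weighted composition operator $U:={}^{e^{\langle v-u,z\rangle_{\theta}}C_T}M\circ W_T$, so that $U[f](z)=e^{\langle v-u,z\rangle_{\theta}}C_T(z)\,f(T(z))$, and to prove the three assertions in the order: isometric isomorphism, then the projection identity, then the kernel identity. Bijectivity of $U$ at the level of function spaces is immediate, since $T$ is a diffeomorphism of $\Xi$ onto $\Omega$ and the multiplier $e^{\langle v-u,z\rangle_{\theta}}C_T$ is invertible pointwise. The one preliminary algebraic fact I would record, read off directly from the definitions in \eqref{e6}, is that $C_T=\kappa\,A_T$ with $\kappa$ a strictly positive \emph{real} constant ($\kappa=|a|^2/|d|^3$ if $c=0$ and $\kappa=|c|^2/|b-ac^{-1}d|^{3}$ if $c\neq 0$, the common factor in the two formulas cancelling). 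Because $\kappa$ is real it commutes with ${}^{\theta}_{u}\mathcal D$, so by Corollary~\ref{isoBergman} one has $U[f]\in{}^{\theta}_{u}\mathfrak M(\Xi)$ if and only if $f\in{}^{\theta}_{\delta_T}\mathfrak M(\Omega)$.

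For the isometry I would compute $\langle U[f],U[g]\rangle$ in $\mathcal L_{2,\gamma_{_T}}(\Xi,\mathbb H)$. Since $\langle v-u,z\rangle_{\theta}$ is $\mathbb R$-valued, $e^{\langle v-u,z\rangle_{\theta}}$ is a real scalar, and the two copies of it produced by $\overline{U[f]}\,U[g]$ combine to $e^{2\langle v-u,z\rangle_{\theta}}$, which is cancelled exactly by the factor $e^{-2\langle v-u,z\rangle_{\theta}}$ in $\gamma_{_T}$. What remains is $\int_{\Xi}\overline{C_T f\circ T}\,C_T g\circ T\,\rho_{_T}\,d\mu$, which by \eqref{L_2ISOMOR} equals $\int_{\Omega}\bar f g\,d\mu=\langle f,g\rangle_{{}^{\theta}_{\delta_T}\mathcal A(\Omega)}$. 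The same computation, valid for all $f,g\in\mathcal L_2(\Omega,\mathbb H)$, shows that $U$ is in fact an isometric isomorphism of the ambient spaces $\mathcal L_2(\Omega,\mathbb H)\to\mathcal L_{2,\gamma_{_T}}(\Xi,\mathbb H)$, a fact I reuse twice below. Combined with the hyperholomorphy equivalence of the first paragraph, this gives the claimed isometric isomorphism onto ${}^{\theta}_{u}\mathcal A_{\gamma_{_T}}(\Xi)$.

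The projection identity then follows with no further computation: the unitary $U$ on the ambient spaces maps the closed subspace ${}^{\theta}_{\delta_T}\mathcal A(\Omega)$ onto ${}^{\theta}_{u}\mathcal A_{\gamma_{_T}}(\Xi)$, hence maps their orthogonal complements onto each other; since by the properties collected in Remark~\ref{HilertSpace} the Bergman projections are precisely the orthogonal projections onto these subspaces, conjugation yields ${}^{\theta}_{u}\mathfrak B_{\Xi,\gamma_{_T}}=U\circ{}^{\theta}_{\delta_T}\mathfrak B_{\Omega}\circ U^{-1}$, the second line of \eqref{equa123}. For the kernel identity I would denote by $\tilde{\mathcal B}(z,w)$ the right-hand side of the first line of \eqref{equa123} and verify it is the reproducing kernel of ${}^{\theta}_{u}\mathcal A_{\gamma_{_T}}(\Xi)$, after which uniqueness (Remark~\ref{HilertSpace}) concludes. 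For fixed $w$, $\tilde{\mathcal B}(\cdot,w)$ equals $U\bigl[{}^{\theta}_{\delta_T}\mathcal B_{\Omega}(\cdot,T(w))\bigr]$ right-multiplied by the constant quaternion $e^{\langle v-u,w\rangle_{\theta}}\overline{C_T(w)}$; as the kernel section lies in ${}^{\theta}_{\delta_T}\mathcal A(\Omega)$ and the target is a right module, this membership is clear. The reproducing property I would check by inserting $\tilde{\mathcal B}$ into $\int_{\Xi}\tilde{\mathcal B}(z,w)h(w)\gamma_{_T}(w)\,d\mu_w$, writing $h=U[f]$, so that $\overline{C_T(w)}h(w)e^{-\langle v-u,w\rangle_{\theta}}=|C_T(w)|^2 f(T(w))$, and then changing variables $\eta=T(w)$; the measure identity $|C_T(w)|^2\rho_{_T}(w)\,d\mu_w=d\mu_\eta$, which is exactly the content of \eqref{L_2ISOMOR}, reduces the integral to $e^{\langle v-u,z\rangle_{\theta}}C_T(z)\int_{\Omega}{}^{\theta}_{\delta_T}\mathcal B_{\Omega}(T(z),\eta)f(\eta)\,d\mu_\eta=e^{\langle v-u,z\rangle_{\theta}}C_T(z)f(T(z))=h(z)$.

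The step I expect to be the main obstacle is this last verification, precisely because non-commutativity and conjugation must be tracked. The two delicate points are: assembling the three exponential factors correctly, which works because $\langle v-u,\cdot\rangle_{\theta}$ is real and additive in its second slot, giving $e^{\langle v-u,z\rangle_{\theta}}e^{\langle v-u,w\rangle_{\theta}}=e^{\langle v-u,z+w\rangle_{\theta}}$; and keeping $C_T$ and $\overline{C_T}$ on their correct sides while using that every $w$-dependent scalar produced ($\rho_{_T}$, the exponentials, and $\overline{C_T}C_T=|C_T|^2$) is real and hence central, so that \eqref{L_2ISOMOR} and the reproducing property of ${}^{\theta}_{\delta_T}\mathcal B_{\Omega}$ apply in the correct left-to-right order. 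A conceptually equivalent route computes the Riesz representative $k^{(2)}_z$ of evaluation at $z$ in ${}^{\theta}_{u}\mathcal A_{\gamma_{_T}}(\Xi)$ from $U k^{(1)}_{T(z)}$ via the isometry, reproducing $\tilde{\mathcal B}$ after a single conjugation and avoiding the Jacobian explicitly, at the cost of equal care with the right-quaternionic inner-product conventions. Throughout I assume, as in Remark~\ref{HilertSpace} and Proposition~\ref{functionaValuation}, that point evaluation is bounded on both spaces so that the two kernels exist; for ${}^{\theta}_{\delta_T}\mathcal A(\Omega)$ this is the same interior-estimate argument, since ${}^{\theta}\mathcal D+\delta_T$ is again a first-order elliptic operator.
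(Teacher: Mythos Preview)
Your proposal is correct and follows essentially the same route as the paper: the relation $C_T=\kappa A_T$ with $\kappa>0$ real combined with Corollary~\ref{isoBergman} for the hyperholomorphy equivalence, the computation based on \eqref{L_2ISOMOR} for the isometry, and then the kernel/projection identities via the isometric isomorphism and uniqueness of reproducing kernels.

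The one structural difference worth noting is the placement and treatment of the bounded-evaluation step. In the paper this is not an aside but the \emph{bulk} of the proof: it begins by establishing directly, via the Cauchy and Stokes formulas \eqref{Cauchy-for-u-theta-hyper}--\eqref{STOkes_alpha_psi} with the weight $\gamma_{_T}^{1/2}$ inserted and Cauchy--Schwarz, that point evaluation is bounded on the weighted space ${}^{\theta}_{u}\mathcal A_{\gamma_{_T}}(\Xi)$; only then does it transfer the Hilbert-space/reproducing-kernel structure to ${}^{\theta}_{\delta_T}\mathcal A(\Omega)$ through the isometry $U$. You do the reverse, sketching the $\Omega$-side estimate by invoking ellipticity of ${}^{\theta}\mathcal D+\delta_T$ (necessary since $\delta_T$ is a function, so the constant-$u$ kernel $K^{\theta}_{u}$ of Proposition~\ref{functionaValuation} is not directly available) and then implicitly transferring to $\Xi$. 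Both directions are valid; the paper's choice keeps everything inside the explicit integral-kernel machinery already built, while yours is shorter but imports a soft ellipticity argument from outside the paper's framework.
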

\begin{proof}  
From   Cauchy  and  Stokes formula recently showed for   $(u,\theta)-$hyperholomorphic functions on $\Omega$  we have  that  
\begin{align*}     f(z) =  &  \int_{ \partial \mathbb B(z,\epsilon)   }   K^{\theta   }_{u}(w -z )     \sigma_{w }^{\theta   } f(w )  
=  \int_{ \partial \mathbb B(z,\epsilon)   }  e^{-2  \langle u ,w   \rangle_{\theta   }}  K^{\theta   }_{u}(w -z )     \nu_u^{\theta   }(w) f(w )\\ 
=&     \frac{  1 }{2\pi^2
\epsilon^4} \int_{\partial  \mathbb B(z,\epsilon)   } e^{  \langle u  , -w-z  \rangle_{\theta   }} [(\bar w _1- \bar z_1) - i e^{-i\theta}
(\bar w _2-  \bar z_2)j ]  \nu_u^{\theta   }(w) f(w ) \\
=&     \frac{  1 }{2\pi^2
\epsilon^4} \int_{ \mathbb B(z,\epsilon)   }   
{}^{\theta}_u\mathcal D_{r}
[e^{  \langle u  , -w-z  \rangle_{\theta   }} [(\bar w _1- \bar z_1) - i e^{-i\theta}
(\bar w _2-  \bar z_2)j ] \ ]    f(w ) d\lambda_{u}^{\theta   }(w  ) \\
=&  \frac{  1 }{2\pi^2
\epsilon^4} \int_{ \mathbb B(z,\epsilon)   }  e^{2  \langle  u ,w   \rangle_{\theta   }}   
{}^{\theta}_u\mathcal D_{r}
[e^{  \langle u  , -w-z  \rangle_{\theta   }} [(\bar w _1- \bar z_1) - i e^{-i\theta}
(\bar w _2-  \bar z_2)j ] \ ]    f(w ) d\mu_w\\
=&  \frac{  1 }{2\pi^2
\epsilon^4} \int_{ \mathbb B(z,\epsilon)   }  \frac{e^{2  \langle u ,w   \rangle_{\theta   }} }{\gamma_{_T}^{\frac{1}{2}} (w)}   
{}^{\theta}_u\mathcal D_{r}
[e^{  \langle u  , -w-z  \rangle_{\theta   }} [(\bar w _1- \bar z_1) - i e^{-i\theta}
(\bar w _2-  \bar z_2)j ] \ ]    f(w ) \gamma_{_T}^{\frac{1}{2}} (w) d\mu_w
,		\end{align*}
for all  $f\in  {}^{     \theta   }_{u} \mathcal A_{\rho_{_T}} (\Xi) $, where  
 $\mathbb B(z,\epsilon) \subset\Omega$, and the Cauchy-Schwarz inequality gives us that 
\begin{align*}    | f(z) | \leq &   \frac{  1 }{2\pi^2
\epsilon^4} \int_{ \mathbb B(z,\epsilon)   }  \mid  \frac{e^{2  \langle u ,w   \rangle_{\theta   }} }{
\sqrt{\gamma_{_T}  (w) }}   
{}^{\theta}_u\mathcal D_{r}
[e^{  \langle u  , -w-z  \rangle_{\theta   }} [(\bar w _1- \bar z_1) - i e^{-i\theta}
(\bar w _2-  \bar z_2)j ] \ ]    f(w ) \sqrt{\gamma_{_T}  (w) }\mid   d\mu_w \\
\leq &   \frac{  1 }{2\pi^2
\epsilon^4} 
\left( \int_{ \mathbb B(z,\epsilon)   } | \ \frac{e^{2  \langle u ,w   \rangle_{\theta   }} }{\sqrt{\gamma_{_T}  (w) }} 
{}^{\theta}_u\mathcal D_{r}
[e^{  \langle u  , -w-z  \rangle_{\theta   }} [(\bar w _1- \bar z_1) - i e^{-i\theta}
(\bar w _2-  \bar z_2)j ] \ ] \ |^2 d\mu_w \right)^{\frac{1}{2} }  \\ 
&  \hspace{2cm} 
\left( \int_{ \mathbb B(z,\epsilon)   } | f(w )|^2  \gamma_{_T}  (w)    d\mu_w \right)^{\frac{1}{2} },
		\end{align*}
for all  $f\in  {}^{     \theta   }_{u} \mathcal A_{ \gamma_{_T}} (\Xi) $. Therefore, there exists  $K_{\epsilon}>0$, that depends of  $\epsilon>0$ such that 
$| f(z) |   
\leq K_{\epsilon} \| f \|_{{}^{     \theta   }_{u} \mathcal A_{\gamma_{_T}} (\Xi) }$
. Using   the previous inequality we obtain   that   
 ${}^{     \theta   }_{u } \mathcal A_{ \gamma_{_T}} (\Xi)$ is a quaternionic right-Hilbert space  whose valuation functional is bounded and as a consequence   ${}^{     \theta   }_{u} \mathcal A_{ \gamma_{_T}} (\Xi)$ has a reproducing  kernel  and a projection.

Due to   $C_T=\lambda_T A_T$  where $\lambda_T $ is a positive real constant,  Corollary \ref{isoBergman} gives us   
$    e^{\langle v-u, \cdot  \rangle_{\theta   }}C_T   f\circ T  \in  
{}^{     \theta   }_{u} \mathfrak M(\Xi) $ iff  $ f \in {}^{     \theta   }_{\delta_{T} } \mathfrak M (\Omega)$ and   
\begin{align*}
& \int_{\Xi} \overline{   e^{\langle v-u, z \rangle_{\theta   }}C_T(z) f\circ T(z)  } 
    e^{\langle v-u, z \rangle_{\theta   }}C_T(z) g\circ T(z)   e^{-2\langle v-u, z \rangle_{\theta   }} \rho_{_T}(z) d\mu_z  \\ = &
\int_{\Omega}      \overline{f(\zeta) }   g(\zeta)  d\mu_{\zeta},
\end{align*}
obtained from  \eqref{L_2ISOMOR}, we conclude that  
 $$
   \langle{}^{ e^{\langle  v-u , z \rangle_{\theta   }}C_T } M \circ W_T [ f], {}^{ e^{\langle   v-u , z \rangle_{\theta   }}C_T } M \circ W_T[ g]\rangle_{
{}^\theta   _{u} \mathcal A_{\gamma_{_T}}(\Xi)} 
=  \langle   f, g  \rangle_{
{}^\theta   _{\delta_T} \mathcal A(\Omega)},$$  
for all $f,g\in {}^\theta   _{\delta_{T}} \mathcal A(\Omega)$. Therefore, ${}^{ e^{\langle  v-u , z \rangle_{\theta   }}C_T } M \circ W_T$ can be used for to show  that  
  ${}^\theta   _{\delta_{T}} \mathcal A (\Omega)$ has the same properties that  $
{}^\theta   _{u} \mathcal A_{\gamma_{_T}}(\Xi)$, i.e., ${}^\theta   _{\delta_{T}} \mathcal A (\Omega)$ is a quaternionic right-Hilbert space  with  a reproducing  kernel  and a projection.

Finally, if  $h \in   { {}^\theta   _{u} \mathcal A_{\gamma_{_T}} (\Xi)} $ then  
 $\left( {}^{ e^{\langle  v-u , z \rangle_{\theta   }}C_T } M \circ W_T  \right)^{-1}[h] \in {}^\theta   _{\delta_{T}} \mathcal A(\Omega)$
 and    
\begin{align*}
\left( {}^{ e^{\langle   v-u , z \rangle_{\theta   }}C_T } M \circ W_T  \right)^{-1}[h] (\zeta) = 
 & \int_{\Omega}   {}^{\theta   }_{\delta_T} \mathcal B_{\Omega,{\gamma_{_T}}}( \zeta, z ) 
 \left( {}^{ e^{\langle   v-u , z \rangle_{\theta   }}C_T } M \circ W_T  \right)^{-1}[h] (z )   d\mu_{z}  \\
= &  \langle    {}^{\theta   }_{\delta_T} \mathcal B_{\Omega}(\cdot,\zeta)  ,   \left( {}^{ e^{\langle   v-u , z \rangle_{\theta   }}C_T } M \circ W_T  \right)^{-1}[h]  \rangle_{
{}^\theta   _{\delta_T} \mathcal A(\Omega)} \\
 = & \langle  {}^{ e^{\langle   v-u , z \rangle_{\theta   }}C_T } M \circ W_T[ {}^{\theta   }_{\delta_T} \mathcal B_{\Omega,{\gamma_{_T}}}(\cdot,\zeta)]  ,   h   \rangle_{
{}^\theta   _{ u } \mathcal A_{   { \gamma_{_T}   }  }(\Xi)} \\
= &   \int_{\Xi} \overline{ {}^{ e^{\langle   v-u , z \rangle_{\theta   }}C_T } M \circ W_T [ {}^{\theta   }_{\delta_T} \mathcal B_{\Omega,{\gamma_{_T}}}(\cdot,\zeta)] }  \     h 
   { \gamma_{_T} } d\mu .
\end{align*}
Apply    ${}^{ e^{\langle  v-u, z \rangle_{\theta   }}C_T } M \circ W_T$ for to see that   
\begin{align*}
h (z) =     \int_{\Xi}   e^{\langle   v-u ,  z + w \rangle_{\theta   }} C_T(z)  \    
 {}^{\theta   }_{\delta_T} \mathcal B_{\Omega}(T(z), T(w)) \overline{ C_T(w)}     h(w)    { 
\gamma_{_T} (w)}  d\mu_{w} .  
\end{align*}
The relationships between the kernels and the projections are consequences of the   identity given above. 
\end{proof}
There exists another kind of  weighted quaternionic Bergman spaces induced by $Ker({}_{u }^{\theta}\mathcal D )$.
\begin{definition}\label{lastBergman}
Given a domain $\Omega\subset\mathbb H$ define   
$${}_{{\lambda}_{u}^\theta   }\mathcal A  (\Omega)=\{ f\in  {}^\theta   _{u}\mathfrak M(\Omega) \ \mid \ 
		\int_{\Omega} |f(z)|^2 d\lambda^{\theta   }_{u}(z)     <\infty\}.  
		$$   
\begin{align*}
 		\|f\|_{{}_{{\lambda}_{ u }^\theta   }\mathcal A  (\Omega)}^2: = &   \int_{\Omega} |f|^2 d\lambda^{\theta   }_{ u }  , \quad  
	\left\langle f,g\right\rangle_{{}_{{\lambda}_{ u }^\theta   }\mathcal A  (\Omega)}:=  
	\int_{\Omega} \bar f g d\lambda^{\theta   }_{ u }, \\
		{}_{ u }^\theta   \mathcal S [f](z):= &  e^{\left\langle u , z \right\rangle_{\theta   }} f(z) , \quad    \forall  z\in \Omega,  
\end{align*}
for all   $f,g\in {}_{{\lambda}_{u}^\theta}\mathcal A (\Omega)$. Let us recall that $d\lambda_{u }^{\theta   }( \zeta   )=   e^{2  \langle u , \zeta    \rangle_{\theta}} d\mu_ \zeta$.
\end{definition}    
We abbreviate ${}_{\lambda_{0}^\theta}\mathcal A(\Omega)$ to ${}_{0}^\theta \mathcal A(\Omega)$ the quaternionic right-Hilbert space, written ${}^\theta \mathcal A(\Omega)$ in \cite{GM}. 

\begin{proposition}\label{proo2}   Given $ u,v \in \mathbb H$ then  ${}_{u}^\theta   \mathcal S :{}_{   \lambda   _{ u }^\theta   } \mathcal A(\Omega)\to  {}^\theta   \mathcal A(\Omega) $ and 
${}_{ u-v  }^\theta     \mathcal S  : {}_{   \lambda   _{ u }^\theta} \mathcal A(\Omega) \to {}_{   \lambda   _{ v }^\theta   } \mathcal A(\Omega)$  
are isometric isomorphisms of quaternionic right-Hilbert spaces and  
\begin{align*} {}_{   \lambda   _{ v }^\theta   }\mathcal  B_{\Omega}(z ,\zeta)= &  e^{ \langle  u - v ,  z  +\zeta   \rangle_{\theta   }} {}_{ \lambda   _{ u }^\theta   }\mathcal B_{\Omega}( z ,\zeta ) ,\\
 {}_{   \lambda   _{ v }^\theta   }\mathfrak B_{\Omega} =&  {}_{ u-v  }^\theta       \mathcal S  \circ  {}_{   \lambda   _{ u }^\theta   }     \mathfrak B_{\Omega}\circ {}_{ v-u  }^\theta    \mathcal S,
\end{align*}
  { where  
${}_{   \lambda   _{ v }^\theta   }\mathcal  B_{\Omega}$ and  $ {}_{   \lambda   _{ v }^\theta   }\mathfrak B_{\Omega}$ 
are  the kernel and the projection 
 of  $ {}_{   \lambda   _{ v }^\theta   } \mathcal A(\Omega)$ respectively and
 $  {}_{ \lambda   _{ u }^\theta   }\mathcal B_{\Omega}$ and $ {}_{   \lambda   _{ u }^\theta   }     \mathfrak B_{\Omega}$ 
are  the kernel and the projection  of ${}_{   \lambda   _{ u }^\theta   } \mathcal A(\Omega)$.}
\end{proposition}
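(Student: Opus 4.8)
The plan is to derive every assertion from the intertwining identity \eqref{exponential0}, together with the elementary observation that $\langle w,z\rangle_{\theta}$ is a \emph{real} number, so that the weight $e^{\langle w,z\rangle_{\theta}}$ is a positive real scalar lying in the centre of $\mathbb H$ and may be commuted past any quaternion. First I would record that, for every $w\in\mathbb H$, the multiplier ${}_{w}^{\theta}\mathcal S[f](z)=e^{\langle w,z\rangle_{\theta}}f(z)$ has ${}_{-w}^{\theta}\mathcal S$ as a two-sided inverse, because $\langle\cdot,z\rangle_{\theta}$ is additive in its first argument and $e^{0}=1$. Setting $w=u-v$ and using \eqref{exponential0}, I would check that
\[
{}^{\theta}_{v}\mathcal D[e^{\langle u-v,z\rangle_{\theta}}f]
={}^{\theta}\mathcal D[e^{\langle u-v,z\rangle_{\theta}}f]+v\,e^{\langle u-v,z\rangle_{\theta}}f
=e^{\langle u-v,z\rangle_{\theta}}\bigl({}^{\theta}\mathcal D[f]+(u-v)f+vf\bigr)
=e^{\langle u-v,z\rangle_{\theta}}\,{}^{\theta}_{u}\mathcal D[f],
\]
which vanishes exactly when $f\in\operatorname{Ker}{}^{\theta}_{u}\mathcal D$. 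Hence ${}_{u-v}^{\theta}\mathcal S$ maps $\operatorname{Ker}{}^{\theta}_{u}\mathcal D$ into $\operatorname{Ker}{}^{\theta}_{v}\mathcal D$ and, with $u$ and $v$ interchanged, back again; this is the crux, since the shift of the parameter from $u$ to $v$ is compensated exactly by the extra term $vf$.

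Next I would establish the isometry. Using $|e^{\langle u-v,z\rangle_{\theta}}|^{2}=e^{2\langle u-v,z\rangle_{\theta}}$, the definition $d\lambda_{v}^{\theta}=e^{2\langle v,z\rangle_{\theta}}d\mu$, and additivity $\langle u-v,z\rangle_{\theta}+\langle v,z\rangle_{\theta}=\langle u,z\rangle_{\theta}$, a one-line computation gives
\[
\|{}_{u-v}^{\theta}\mathcal S[f]\|_{{}_{\lambda_{v}^{\theta}}\mathcal A(\Omega)}^{2}
=\int_{\Omega}e^{2\langle u-v,z\rangle_{\theta}}|f|^{2}e^{2\langle v,z\rangle_{\theta}}\,d\mu
=\int_{\Omega}|f|^{2}\,d\lambda_{u}^{\theta}
=\|f\|_{{}_{\lambda_{u}^{\theta}}\mathcal A(\Omega)}^{2}.
\]
Combined with the previous paragraph and the inverse ${}_{v-u}^{\theta}\mathcal S$, this shows ${}_{u-v}^{\theta}\mathcal S$ is an isometric isomorphism of quaternionic right-Hilbert spaces; the case $v=0$ is precisely the first assertion. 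The identical computation on the full spaces shows ${}_{u-v}^{\theta}\mathcal S:\mathcal L_{2,\lambda_{u}^{\theta}}(\Omega,\mathbb H)\to\mathcal L_{2,\lambda_{v}^{\theta}}(\Omega,\mathbb H)$ is unitary. Since ${}^{\theta}\mathcal A(\Omega)$ is a reproducing-kernel right-Hilbert space by \cite{GM}, transporting along ${}_{u}^{\theta}\mathcal S$ also shows that each ${}_{\lambda_{u}^{\theta}}\mathcal A(\Omega)$ has bounded point evaluations, so its Bergman kernel and projection exist and the remaining formulas are meaningful.

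For the kernel identity I would verify that $G(z,\zeta):=e^{\langle u-v,z+\zeta\rangle_{\theta}}\,{}_{\lambda_{u}^{\theta}}\mathcal B_{\Omega}(z,\zeta)$ is the reproducing kernel of ${}_{\lambda_{v}^{\theta}}\mathcal A(\Omega)$ and then invoke uniqueness (the analogue of property~3 in Remark~\ref{HilertSpace}). Membership $G(\cdot,\zeta)\in{}_{\lambda_{v}^{\theta}}\mathcal A(\Omega)$ is immediate because $G(\cdot,\zeta)=e^{\langle u-v,\zeta\rangle_{\theta}}\,{}_{u-v}^{\theta}\mathcal S[{}_{\lambda_{u}^{\theta}}\mathcal B_{\Omega}(\cdot,\zeta)]$ is a real scalar multiple of an element of ${}_{\lambda_{v}^{\theta}}\mathcal A(\Omega)$. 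For the reproducing property I would take $g\in{}_{\lambda_{v}^{\theta}}\mathcal A(\Omega)$ and put $f:={}_{v-u}^{\theta}\mathcal S[g]\in{}_{\lambda_{u}^{\theta}}\mathcal A(\Omega)$; regrouping the central exponentials by additivity turns $\int_{\Omega}G(z,\zeta)g(\zeta)\,d\lambda_{v}^{\theta}(\zeta)$ into $e^{\langle u-v,z\rangle_{\theta}}\int_{\Omega}{}_{\lambda_{u}^{\theta}}\mathcal B_{\Omega}(z,\zeta)f(\zeta)\,d\lambda_{u}^{\theta}(\zeta)=e^{\langle u-v,z\rangle_{\theta}}f(z)=g(z)$, where the middle step is the reproducing property of ${}_{\lambda_{u}^{\theta}}\mathcal B_{\Omega}$ and the last uses $f(z)=e^{\langle v-u,z\rangle_{\theta}}g(z)$. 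This is the first displayed formula.

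Finally, for the projection identity I would argue conceptually: a unitary operator carrying one closed subspace onto another intertwines the associated orthogonal projections. Since ${}_{u-v}^{\theta}\mathcal S$ is unitary from $\mathcal L_{2,\lambda_{u}^{\theta}}(\Omega,\mathbb H)$ onto $\mathcal L_{2,\lambda_{v}^{\theta}}(\Omega,\mathbb H)$ and sends the closed subspace ${}_{\lambda_{u}^{\theta}}\mathcal A(\Omega)$ onto ${}_{\lambda_{v}^{\theta}}\mathcal A(\Omega)$, it follows that ${}_{\lambda_{v}^{\theta}}\mathfrak B_{\Omega}\circ{}_{u-v}^{\theta}\mathcal S={}_{u-v}^{\theta}\mathcal S\circ{}_{\lambda_{u}^{\theta}}\mathfrak B_{\Omega}$; composing on the right with $({}_{u-v}^{\theta}\mathcal S)^{-1}={}_{v-u}^{\theta}\mathcal S$ gives the stated formula. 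Alternatively one may simply substitute the kernel identity into ${}_{\lambda_{v}^{\theta}}\mathfrak B_{\Omega}[h](z)=\int_{\Omega}{}_{\lambda_{v}^{\theta}}\mathcal B_{\Omega}(z,\zeta)h(\zeta)\,d\lambda_{v}^{\theta}(\zeta)$ and regroup exponentials exactly as above. I expect no serious obstacle; the only point demanding care is the non-commutative bookkeeping, and this is harmless because the weight $e^{\langle\cdot,\cdot\rangle_{\theta}}$ is central, the genuinely essential step being the parameter-shift cancellation of the first paragraph.
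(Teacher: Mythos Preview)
Your proof is correct and follows essentially the same route as the paper's: both use the intertwining identity \eqref{exponential0} to move between $\operatorname{Ker}{}^{\theta}_{u}\mathcal D$ and $\operatorname{Ker}{}^{\theta}_{v}\mathcal D$, verify that ${}_{u}^{\theta}\mathcal S$ preserves the relevant inner product by a direct weight computation, and then derive the kernel and projection formulas from the resulting isometric isomorphism (the paper simply asserts that ``\eqref{finalprop} is used to prove the relationships'' and records $({}_{v}^{\theta}\mathcal S)^{-1}={}_{-v}^{\theta}\mathcal S$ and ${}_{u-v}^{\theta}\mathcal S={}_{-v}^{\theta}\mathcal S\circ{}_{u}^{\theta}\mathcal S$). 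Your version is more explicit---you spell out the parameter-shift cancellation, the reproducing-property check for the transported kernel, and the unitary-intertwining argument for the projection---but the underlying strategy is identical.
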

\begin{proof}  { If $f,g\in {}_{   \lambda   _{ u }^\theta   } \mathcal A(\Omega)$ 
   then ${}_{ u }^\theta   \mathcal S[f], {}_{ u }^\theta   \mathcal S[g]\in {}^\theta   \mathfrak M(\Omega)$, see  \eqref{exponential}, } and  
\begin{align}
\langle  {}_{ u }^\theta   \mathcal S[f] , {}_{ u }^\theta   \mathcal S[g] \rangle_{ {}^\theta   \mathcal A(\Omega)} = & \int_{\Omega} \overline{{}_{ u }^\theta   \mathcal S[f]} {}_{ u }^\theta   \mathcal S[g] d\mu 
= \int_{\Omega} \overline{ f(x) }  g(x) e^{2\langle  u , x  \rangle_{\theta   }} d\mu_x \nonumber \\
= & \label{finalprop} \langle  f,g \rangle_{ {}_{   \lambda   _{ u }^\theta   } \mathcal A(\Omega)} .
\end{align}
  As ${}_{ u }^\theta   \mathcal S :{}_{   \lambda   _{ u }^\theta   } \mathcal A(\Omega) \to   {}^\theta \mathcal A(\Omega)$ is a bijective quaternionic right-linear  operator that preserves the inner product we conclude that   ${}_{\lambda_{u}^\theta} \mathcal A(\Omega) $ is a copy of $ {}^\theta  \mathcal A(\Omega)$, as quaternionic right-Hilbert spaces, and  \eqref{finalprop} is used to  prove the relationships between the  reproducing kernels and  between the projections.  For the rest of conclusions note that 
$$({}_{v}^\theta   \mathcal S)^{-1}= {}_{-v}^\theta \mathcal S$$ 
and 
$${}_{u-v}^\theta \mathcal S = {}_{ -v }^\theta \mathcal S \circ {}_{u}^\theta \mathcal S$$.     
\end{proof}

\begin{remark}
\begin{enumerate}
\item If there exists $\lambda \in \mathbb R$ such that $\Omega \subset \{ z \in \mathbb H \  \mid \  \left\langle u , z \right\rangle_{\theta   } <\lambda\}$. Then $  {}_{ u }^\theta   \mathcal A (\Omega)\subset {}_{   \lambda   _{ u }^\theta   }\mathcal A (\Omega) $ as  function sets.
\item If there exists $\lambda \in \mathbb R$ such that 
$\Omega \subset \{ z \in \mathbb H \  \mid \  \left\langle u , z \right\rangle_{\theta   } > \lambda   \}$. Then
${}_{   \lambda   _{ u }^\theta   }\mathcal A (\Omega)  \subset   {}_{ u }^\theta   \mathcal A (\Omega) $  as  function sets. 
\item If $\Omega$ is a bounded domain hence $  {}_{   \lambda   _{ u }^\theta   }\mathcal A (\Omega)  =  {}_{ u }^\theta    \mathcal A (\Omega) $ as  function sets.
\end{enumerate}
\end{remark}

\section{The $(\theta, u)-$hyperholomorphic Bergman type spaces in domains of $\mathbb C^2$}
To illustrate the main ideas and motivation of this paper, we develop a theory of Bergman spaces for certain family of holomorphic function on the setting of bounded smooth domains in $\mathbb C^2$, equipped with the topology induced by its usual norm.
\begin{definition}	Let $\Omega\subset \mathbb C^2\cong \mathbb H$ be a domain and set $\alpha,\beta\in \mathbb C$. The  real linear space of $(\alpha, \beta)-$holomorphic functions on $\Omega$, denoted by ${}_{\alpha,\beta }Hol(\Omega, \mathbb C)$, consists of all $f \in C^1(\Omega,\mathbb C)$ such that   
\begin{align}\label{VSistem}
\begin{array}{l}
\displaystyle\frac{\partial f}{\partial \bar z_1} =- \alpha  f \\
\\
\displaystyle\frac{\partial f}{\partial \bar z_2} = - \beta f .
\end{array}  
\end{align}
\end{definition}
This definition derives from the identity 
$$ {}^{\theta}\mathcal D_{r,(\alpha +ie^{i\theta} j  \beta)}[f]=0$$ 
with $f \in C^1(\Omega,\mathbb C)$.

Note that solutions of (\ref{VSistem}) can be thought as $(\theta, u)-$hyperholomorphic functions, when $u=\alpha +ie^{i\theta} j\beta$ using (\ref{e1}).

It is the fact that ${}_{0,0 }Hol(\Omega, \mathbb C) = Hol(\Omega, \mathbb C)$ that makes notation ${}_{\alpha,\beta }Hol(\Omega, \mathbb C)$ allowable for a natural generalization of the space of holomorphic functions.

Application of $ {}^{\theta}\mathcal D_{r,(\alpha +ie^{i\theta} j  \beta)}$ to $g \in  C^{1}(\Omega,\mathbb C)$ gives 
$${}^{\theta}\mathcal D_{r,(\alpha+ie^{i\theta} j  \beta )} [g] =  \displaystyle\frac{\partial g}{\partial \bar z_1} +  \alpha   g  + 
i e^{i\theta} j\left( \displaystyle\frac{\partial \bar g}{\partial \bar z_2}+  \beta  \bar g   \right)$$ 
and hence $g \in Ker \ {}^{\theta}\mathcal D_{r,(\alpha+ie^{i\theta} j  \beta)} \cap C^{1}(\Omega,\mathbb C)$ if and only if
\begin{align}\label{antiVSistem}
\begin{array}{l}
\displaystyle\displaystyle\frac{\partial g}{\partial \bar z_1} = -  \alpha   g  \\
\\
\displaystyle\frac{\partial g}{\partial z_2}= - \bar \beta g.
\end{array}  
\end{align}
Unfortunately, here appears implicitly a condition of non-homogeneous anti-holomorphy.   

Now we shall consider Stokes and Borel-Pompieu types formulas induced by equations \eqref{VSistem} and \eqref{antiVSistem}. Also  Cauchy type  formula  for $ {}_{\alpha,\beta}Hol(\Omega, \mathbb C)$ and a  kind inverse of operator $\displaystyle (\frac{\partial  }{\partial \bar z_1}, \displaystyle\frac{\partial}{\partial z_2} )$ are presented.   
\begin{corollary} 
Let $\Omega\subset \mathbb C^2$ be a bounded domain with $\partial \Omega$ a 3-dimensional compact sufficiently smooth surface. 
\begin{enumerate}
\item  Integral Stokes formula
\begin{align*}
\int_{\partial \Omega} f \nu_1   g = &\int_{\Omega} [ \ (
\frac{\partial f} {\partial \bar z_1}(z) + \alpha   f(z) )  g(z) + f(z)  (\frac{\partial g} {\partial \bar z_1}(z)  + \alpha  g(z) )\ ]
  e^{ \textrm{Re} (\alpha z_1+ \beta z_2)} d\mu_z 
\end{align*}
and   
\begin{align*}
\int_{\partial \Omega} \bar{f }\nu_2  g = &\int_{\Omega} [ \ 
\overline{(\frac{\partial f} {\partial \bar z_2}(z)  + \beta f(z) )} g(z) + \overline{f(z)} ( \frac{\partial\bar g} {\partial \bar z_2}(z) + \beta  \overline{ g(z) }) \ ]  e^{ \textrm{Re} (\bar \alpha z_1+ \bar \beta z_2)} d\mu_z, 
\end{align*}
	for all $f,g \in  C^1( {\Omega}, \mathbb C)\cap C (\overline{\Omega}, \mathbb C)$,   where 
$	z= z_1 + i
e^{i\theta}j z_2  \in \Omega$.  	The complex differential forms $\nu_1$ and $\nu_2$ are  such that 
$  \nu_{  \alpha +i
e^{i\theta}j \beta }^\theta  =
\nu_1 + i
e^{i\theta}j\nu_2 $.
\item Borel-Pompieu Formula.
\begin{align*}  &  \int_{\partial \Omega} (K_1(\zeta   -z )\sigma_1(\zeta )  +     K_2(\zeta   -z ) \sigma_2(\zeta ) ) f(\zeta   )  \\
&   - 
\int_{\Omega} \left(   \ K_1(\zeta  -z ) (\frac{\partial f}{\partial \bar \zeta _1}(\zeta ) +  \alpha   f(\zeta )) +   K_2(\zeta   -z ) ( \frac{\partial f}{\partial \bar \zeta _2}(\zeta ) +   \beta f(\zeta ) )  \right)d  {\mu}_\zeta     \nonumber \\
		=  &      \left\{ \begin{array}{ll}  f(z ) , &  z \in \Omega,  \\ 0 , &  z \in \mathbb H\setminus\overline{\Omega} ,                    \end{array} \right. 
		\end{align*}  
 	for all $f \in  C^1( {\Omega}, \mathbb C)\cap C (\overline{\Omega}, \mathbb C)$,  where 
\begin{align*} 
K_1( \zeta   -z )= & \frac{ e^{    \textrm{Re} ( \bar  \alpha   (\zeta_1-z_1)  + \bar  \beta     (\zeta_2 -z_2) )  }}{2\pi^2
| \zeta  - z|^4}  (\bar \zeta  _1- \bar z_1) , \\
  K_2( \zeta   -z )= & \frac{ e^{    \textrm{Re} ( \bar  \alpha  ( \zeta_1 - z_1)  + \bar  \beta  (\zeta_2 -  z_2) )  -2i\theta } 
 }{2\pi^2
| \zeta  - z|^4} 
( \bar  \zeta  _2- \bar z_2)  , 
\end{align*}
 and $ \sigma_1$, $\sigma_2 $ are complex differential forms such that $\sigma_{\zeta}^{\theta} = \sigma_1(\zeta )+ie^{i\theta} j \sigma_2(\zeta )$ and $\zeta= \zeta_1 + ie^{i\theta}j \zeta_2$
\item Cauchy type formula in $ {}_{\alpha,\beta}Hol(\Omega, \mathbb C)$.
\begin{align*}    
\int_{\partial \Omega } ( \ K_1(\zeta   -z )\sigma_1 (\zeta ) +     K_2(\zeta   -z ) \sigma_2(\zeta ) \ ) f(\zeta   )  		=  &      \left\{ \begin{array}{ll}  f(z ) , &  z \in \Omega,  \\ 0 , &  z \in \mathbb H\setminus\overline{\Omega}, \end{array} \right. 
\end{align*} 
for $f \in  {}_{\alpha,\beta }Hol(\Omega) $. Hence, the pair	$(K_1 , K_2 )$ are our reproducing functions.	
\item (A kind of inverse  operators of $\displaystyle (\frac{\partial  }{\partial \bar z_1}, \displaystyle\frac{\partial}{\partial z_2})$). 
For any $f \in \mathcal L_2(\Omega,\mathbb C)\cup C(\Omega,\mathbb C)$ there holds that 
\begin{align}\label{systeminv}   
\frac{\partial}{\partial \bar z_1} {}_{\alpha,\beta} T_1 (f) +  \frac{\partial}{\partial   z_2} \overline{ {}_{\alpha,\beta} T_2 (f)}   & = f 
\\
 -  \frac{\partial}{\partial \bar z_1} {}_{\alpha,\beta} T_2 (f ) +   \frac{\partial}{\partial   z_2} \overline{ {}_{\alpha,\beta} T_1 (f)}   & = 0,
\end{align} 
on $\Omega$, where 
\begin{align*} 
{}_{\alpha,\beta} T_1 (f) (z) = & \int_{\Omega} 
K_1( \zeta   -z )
f ( \zeta) 
  d\mu_{\zeta} \\ 
& \\
{}_{\alpha,\beta} T_2 (f) (z) = & \int_{\Omega} K_2( \zeta   -z ) \bar f ( \zeta) 
 d\mu_{\zeta} ,
\end{align*}
 where $\zeta =\zeta_1+ i e^{i\theta} j \zeta_2, \ 
 z =z_1+ i e^{i\theta} j z_2 \in \Omega$, with $\zeta_k,z_k\in\mathbb C$ for $k=1,2$. 
\end{enumerate}
\end{corollary}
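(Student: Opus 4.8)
The plan is to obtain all four statements as the complex-scalar specialization of the quaternionic $(\theta,u)$-machinery already established, taking $u=\alpha+ie^{i\theta}j\beta$ and regarding $f,g\in C^1(\Omega,\mathbb C)$ as $\mathbb H$-valued functions whose $j$-component vanishes. The single organizing principle is that each quaternionic identity is an equation in $\mathbb H=\mathbb C\oplus ie^{i\theta}j\,\mathbb C$, so projecting it onto its $\mathbb C$-part and onto its $ie^{i\theta}j\,\mathbb C$-part yields two complex scalar equations. The only algebra needed for this projection is the commutation rule $aj=j\bar a$ (equivalently $ie^{i\theta}j\,a=ie^{i\theta}\bar a\,j$), which is responsible for every conjugation and phase factor appearing in the statement. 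As a preliminary I would record that for scalar $f$ one has ${}^{\theta}_{u}\mathcal D[f]=(\partial_{\bar z_1}f+\alpha f)+ie^{i\theta}(\partial_{z_2}\bar f+\bar\beta\bar f)j$, whose $\mathbb C$-part is $\partial_{\bar z_1}f+\alpha f$ and whose $ie^{i\theta}j$-part is governed by $\partial_{\bar z_2}f+\beta f$; thus ${}^{\theta}_{u}\mathcal D[f]=0$ recovers exactly \eqref{VSistem}, identifying the two scalar quantities that will occur throughout.

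For Part 1 I would substitute the scalar $f,g$ into the quaternionic Stokes formula \eqref{STOkes_alpha_psi} of Proposition \ref{Stokes-Borel-Pompieu}, write $\nu^{\theta}_{u}=\nu_1+ie^{i\theta}j\,\nu_2$, and separate the two components of the resulting $\mathbb H$-valued identity; the density $d\lambda^{\theta}_{u}$ and the area element $\nu^{\theta}_{u}$ supply the exponential weights, whose precise (conjugated) form $e^{\mathrm{Re}(\alpha z_1+\beta z_2)}$ in the first formula and $e^{\mathrm{Re}(\bar\alpha z_1+\bar\beta z_2)}$ in the second is fixed by transporting $j$ across the complex factors, which is also what produces the $\bar f$ of the second formula. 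Part 2 is handled identically starting from the Borel--Pompieu formula of Proposition \ref{Stokes-Borel-Pompieu} with $g=0$: I would insert $K^{\theta}_{u}(\zeta-z)=e^{\langle u,\zeta-z\rangle_\theta}K_\theta(\zeta-z)$ with $K_\theta(\zeta-z)=\frac{1}{2\pi^2|\zeta-z|^4}[(\bar\zeta_1-\bar z_1)-ie^{-i\theta}(\bar\zeta_2-\bar z_2)j]$ together with $\sigma^{\theta}_{\zeta}=\sigma_1+ie^{i\theta}j\,\sigma_2$, and read off $K_1,K_2$ as the scalar components. Here $\langle u,\zeta-z\rangle_\theta=\mathrm{Re}(\bar\alpha(\zeta_1-z_1)+\bar\beta(\zeta_2-z_2))$ gives exactly the weight of $K_1$, while the phase $e^{-2i\theta}$ attached to $K_2$ and the conjugations are those created when $j$ is moved past the complex coefficients. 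The volume integrand collapses to $K_1(\partial_{\bar\zeta_1}f+\alpha f)+K_2(\partial_{\bar\zeta_2}f+\beta f)$ precisely because these are the scalar pieces of ${}^{\theta}_{u}\mathcal D[f]$ recorded above.

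Part 3 is then immediate: for $f\in{}_{\alpha,\beta}Hol(\Omega)$ both $\partial_{\bar\zeta_1}f+\alpha f$ and $\partial_{\bar\zeta_2}f+\beta f$ vanish by \eqref{VSistem}, so the volume term in Part 2 disappears and only the boundary Cauchy integral survives. For Part 4 I would specialize Remark \ref{rem8} (equivalently Proposition \ref{Dinver}) to $f_2=0$, $f_1=f$. Comparing kernels gives the identifications ${}^{\theta}_{u}T_1(f,0)={}_{\alpha,\beta}T_1(f)$ and ${}^{\theta}_{u}T_2(f,0)=-ie^{i\theta}\,{}_{\alpha,\beta}T_2(f)$, the factor $-ie^{i\theta}$ coming from converting the $j$-coefficient $-ie^{-i\theta}$ of $K_\theta$ into the kernel $K_2$. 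Substituting these into the two scalar equations of Remark \ref{rem8}, the common $ie^{i\theta}$ factors cancel: the first equation becomes $\partial_{\bar z_1}{}_{\alpha,\beta}T_1(f)+\partial_{z_2}\overline{{}_{\alpha,\beta}T_2(f)}=f$ and the second becomes $-\partial_{\bar z_1}{}_{\alpha,\beta}T_2(f)+\partial_{z_2}\overline{{}_{\alpha,\beta}T_1(f)}=0$, which is exactly \eqref{systeminv}.

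The main obstacle is purely the non-commutative bookkeeping: each time a $j$ is transported across a complex scalar it introduces a conjugation, and tracking these consistently is what decides whether $f$ or $\bar f$, and which of $e^{\mathrm{Re}(\alpha z_1+\beta z_2)}$ or $e^{\mathrm{Re}(\bar\alpha z_1+\bar\beta z_2)}$, and which phase $e^{-2i\theta}$, appears in each slot. Pinning down these constants --- in particular the $e^{-2i\theta}$ in $K_2$ and the $-ie^{i\theta}$ relating ${}^{\theta}_{u}T_2$ to ${}_{\alpha,\beta}T_2$ --- is the only delicate point; the analytic substance (Stokes' theorem, the reproducing property of $K_\theta$, and differentiation of the weakly singular transform) is inherited wholesale from Propositions \ref{Stokes-Borel-Pompieu} and \ref{Dinver} and requires no new estimates.
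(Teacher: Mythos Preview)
Your proposal is correct and follows essentially the same approach as the paper: both derive items 1--3 by specializing Proposition \ref{Stokes-Borel-Pompieu} to $u=\alpha+ie^{i\theta}j\beta$ with $\mathbb C$-valued $f,g$ (using the computation $\langle u,z\rangle_\theta=\mathrm{Re}(\bar\alpha z_1+\bar\beta z_2)$) and obtain item 4 from Proposition \ref{Dinver} and Remark \ref{rem8}. You are simply more explicit than the paper about the component-wise projection and the $j$-commutation bookkeeping, which the paper leaves implicit.
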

\begin{proof} 
Items $1.$, $2.$ and $3.$ are consequences of Proposition \ref{Stokes-Borel-Pompieu} and making use of
\begin{align*} 
  \langle  \alpha +i
e^{i\theta}j \beta  ,  z_1 +i
e^{i\theta}j z_2    \rangle_{\theta   } = &  (\frac{\alpha  +\bar \alpha }{2} )(\frac{z_1 +\bar z_1}{2}) +( 
		 \frac{\alpha  -\bar \alpha }{2i})( \frac{z_1 -\bar z_1}{2i} ) \\
		 &+ (\frac{\beta  +\bar \beta }{2})( \frac{z_2 +\bar z_2}{2} )+ (
		 \frac{\beta  -\bar \beta }{2i})( \frac{z_2 -\bar z_2}{2i}) \\
			= &  \frac{1}{2} (  \alpha  \bar z_1  +   \bar \alpha     z_1  +   \beta   \bar z_2 +  \bar \beta  z_2) \\
						= &    \textrm{Re} ( \bar  \alpha   z_1  + \bar  \beta     z_2)  
\end{align*}
 and 
\begin{align*} 
d\lambda_{\alpha+ i e^{i\theta} j \beta }^{\theta}(z ) = &  e^{\textrm{Re} ( \bar  \alpha z_1  + \bar \beta z_2) } d\mu_z.
\end{align*}

Finally, item $4.$ follows from Proposition \ref{Dinver} and remark \ref{rem8}.
\end{proof}
\begin{remark}
Equation \eqref{systeminv} can be rewritten as follows:
\begin{align*} \left(
\begin{array} {cc} 
\displaystyle 
\frac{\partial}{\partial \bar z_1} &  \displaystyle  \frac{\partial}{\partial   z_2} \\ 
 &\\
 \displaystyle  \frac{\partial}{\partial  \bar z_2}  & \displaystyle  -  \frac{\partial}{ \partial z_1} 
\end{array}\right) 
\left(
\begin{array} {c} 
\displaystyle 
{}_{\alpha,\beta} T_1 (f) \\
\\
\displaystyle  \overline{ {}_{\alpha,\beta} T_2 (f)} \end{array}\right) 
=
\left(
\begin{array} {c} 
\displaystyle 
f\\
\\
 0 \end{array}\right) 
\end{align*} 
on $\Omega$, for any $f \in \mathcal L_2(\Omega,\mathbb C)\cup C(\Omega,\mathbb C)$.
\end{remark}

\begin{corollary}  Consider  $\alpha,\beta , \chi, \xi \in\mathbb C$ and  $\Omega, \Xi \subset \mathbb C^2 \approx \mathbb H$  such that    
$T(\Xi)=\Omega$, where $T$ is given by \eqref{e4} for $c=0$ and $d\in\mathbb C$. 

Denote $\zeta_1+i e^{i\theta } j \zeta_2  =T(z_1+ i e^{i\theta } jz_2  ) \in \Omega $ for $z_1+ i e^{i\theta } jz_2\in \Xi $. Then  
  $f \in C^1(\Omega,\mathbb C)$ satisfies 
\begin{align*} \left\{
\begin{array}{l}
\displaystyle\frac{\partial f}{\partial \bar \zeta_1} =- \delta_1  f \\
\\
\displaystyle\frac{\partial f}{\partial \bar \zeta_2} = - \delta_2 f .
\end{array} \right.
\end{align*}
on $\Omega$ if and only if 	
\begin{align*}
\left\{
\begin{array}{l}
\displaystyle\frac{\partial ( \
e^{ \textrm{Re} ( \overline{( \chi - \alpha  )} z_1  + \overline{( \xi - \beta  )}    z_2) } 
 A_T f\circ T   ) }{\partial \bar z_1} =- \alpha   e^{ \textrm{Re} ( \overline{( \chi - \alpha  )} z_1  + \overline{( \xi - \beta  )}    z_2) } A_T f\circ T  \\
\\
\displaystyle\frac{\partial  ( \ e^{ \textrm{Re} ( \overline{( \chi - \alpha  )} z_1  + \overline{( \xi - \beta  )}    z_2) } A_T f\circ T ) }{\partial \bar z_2} = - \beta  
e^{ \textrm{Re} ( \overline{( \chi - \alpha  )} z_1  + \overline{( \xi - \beta  )}    z_2) } A_T f\circ T .
\end{array} \right.
\end{align*}
on $\Xi$, where 
$\delta_T=  (B_T\circ T)^{-1}  (\chi + i e^{i\theta} j \xi )
  (A_T\circ T^{-1} )  =  \delta_1 + ie^{i\theta} \delta_2$ and $ \delta_1, \delta_2:\Omega\to \mathbb C$.
\end{corollary}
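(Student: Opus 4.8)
The plan is to read this statement as the complex-valued restriction of the conformal covariance already established in Corollary \ref{isoBergman}. First I would set $u=\alpha+ie^{i\theta}j\beta$ and $v=\chi+ie^{i\theta}j\xi$, so that the wave number $\delta_T=(B_T\circ T)^{-1}v(A_T\circ T^{-1})$ occurring here is precisely the one produced by Proposition \ref{ProCon}, its decomposition $\delta_T=\delta_1+ie^{i\theta}j\delta_2$ with $\delta_1,\delta_2:\Omega\to\mathbb C$ being just the standard writing of a quaternion in this notation. The guiding observation is the note recorded after \eqref{VSistem}: a complex-valued function annihilated by a first-order system of the type \eqref{VSistem} is exactly a $(\theta,\cdot)$-hyperholomorphic function with vanishing $j$-component, so both systems appearing in the statement are instances of \eqref{e1} taken with $f_2=0$.

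Next I would record two computational facts. The first is the identity obtained in the proof of the preceding corollary, namely $\langle w_1+ie^{i\theta}jw_2,\,z_1+ie^{i\theta}jz_2\rangle_{\theta}=\textrm{Re}(\bar w_1 z_1+\bar w_2 z_2)$; applied to $v-u=(\chi-\alpha)+ie^{i\theta}j(\xi-\beta)$ it yields $\langle v-u,z\rangle_{\theta}=\textrm{Re}(\overline{(\chi-\alpha)}\,z_1+\overline{(\xi-\beta)}\,z_2)$, which is exactly the exponent displayed in the statement. The second is that for $c=0$ and $d\in\mathbb C$ the conformal weight reduces to $A_T(z)=\bar d\in\mathbb C$; hence $g:=e^{\langle v-u,z\rangle_{\theta}}A_T\,f\circ T$ is a product of a real exponential, the complex constant $\bar d$ and the complex-valued $f\circ T$, and therefore stays complex-valued. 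This is the one place where the hypotheses $c=0$, $d\in\mathbb C$ are genuinely used, and I expect it to be the main (if modest) obstacle: without them $A_T$ would carry a $j$-part and the transported function would leave the scalar subspace $C^1(\Xi,\mathbb C)$, so the reduction of $g\in{}_{u}^{\theta}\mathfrak M(\Xi)$ to the two scalar equations via \eqref{e1} with $g_2=0$ would break down.

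With these preparations the argument is immediate. Applying Corollary \ref{isoBergman} to this choice of $u,v$ gives that $f\in{}_{\delta_T}^{\theta}\mathfrak M(\Omega)$ if and only if $g=e^{\langle v-u,z\rangle_{\theta}}A_T\,f\circ T\in{}_{u}^{\theta}\mathfrak M(\Xi)$, where by the previous step the exponent equals $\textrm{Re}(\overline{(\chi-\alpha)}\,z_1+\overline{(\xi-\beta)}\,z_2)$. I would then unfold both memberships through \eqref{e1}. Since $f$ is complex-valued its $j$-component vanishes, so $f\in{}_{\delta_T}^{\theta}\mathfrak M(\Omega)$ collapses to $\partial f/\partial\bar\zeta_1=-\delta_1 f$ and $\partial f/\partial\bar\zeta_2=-\delta_2 f$ on $\Omega$, i.e.\ the left-hand system. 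Likewise $g$ is complex-valued, so $g\in{}_{u}^{\theta}\mathfrak M(\Xi)$ collapses to $\partial g/\partial\bar z_1=-\alpha g$ and $\partial g/\partial\bar z_2=-\beta g$ on $\Xi$, which upon substituting $g=e^{\langle v-u,z\rangle_{\theta}}A_T\,f\circ T$ is precisely the right-hand system. Matching the two equivalences completes the proof.
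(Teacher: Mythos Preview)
Your proposal is correct and follows essentially the same route as the paper: the paper's proof simply says one uses that the hypotheses $c=0$, $d\in\mathbb C$ force $A_T=\bar d$ to be complex-valued and then applies Proposition~\ref{ProCon} (equivalently, Corollary~\ref{isoBergman}). Your write-up is just a more explicit unpacking of these two steps, including the identification $\langle v-u,z\rangle_\theta=\textrm{Re}(\overline{(\chi-\alpha)}z_1+\overline{(\xi-\beta)}z_2)$ and the reduction of \eqref{e1} when the $j$-component vanishes.
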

\begin{proof} We have only to use the fact that conditions on $T$ imply $A_T$ is a complex valued function and application of Proposition \ref{ProCon}.
\end{proof}

We will present some properties of the Bergman space induced by \eqref{VSistem}.
\begin{definition}\label{ComplexBergman}  
The complex  linear space 
${ }_{\alpha,\beta}A(\Omega)= { }_{\alpha,\beta}Hol(\Omega, \mathbb C)  \cap \mathcal L_2(\Omega, \mathbb C)$ is called  
 $ \alpha,\beta$-holomorphic Bergman space associated to $\Omega$ and denote  
\begin{align*}
\|f\|_{{ }_{   \alpha,\beta}A(\Omega)}  = & \left( \int_{\Omega}|f|^2d\mu  \right)^{\frac{1}{2}},
\langle f, g\rangle_{{ }_{ \alpha,\beta  }A(\Omega)}  = \int_{\Omega}\bar f g d\mu  ,
\end{align*}
for all $f,g\in  { }_{\alpha,\beta }A(\Omega)$.   
\end{definition}
\begin{remark}  
Due to ${ }_{\alpha,\beta}A(\Omega)$ is a closed  subset of ${}_{\alpha+ i e^{i\theta}\beta}{}^{\theta}\mathcal A(\Omega)$ and that  
$$\langle f, g\rangle_{{ }_{   \alpha,\beta  }A(\Omega)} = \langle f, g\rangle_{{}_{\alpha+ i e^{i\theta}\beta}{}^{\theta}\mathcal A(\Omega)}$$ 
for all $f, g\in  { }_{\alpha,\beta}A(\Omega)$ we obtain that $( { }_{\alpha,\beta}A(\Omega), \langle \cdot, \cdot \rangle_{{ }_{\alpha, \beta}A(\Omega)})$ is a complex Hilbert space. 

As the valuation functional is bounded on  ${}_{\alpha+ i e^{i\theta}\beta}{}^{\theta}\mathcal A(\Omega)$ so it is bounded on ${ }_{\alpha, \beta} A(\Omega)$. Therefore, Riesz representation theorem for complex linear spaces establishes the existence of a reproducing  kernel  ${ }_{\alpha,\beta}B_{\Omega}:\Omega\times \Omega \to \mathbb C$ such that
$$f(z) =\int_{\Omega} { }_{   \alpha ,\beta  }B_{\Omega}(z,w) f(w)d\mu_w, \quad \forall f\in { }_{\alpha, \beta}A(\Omega)$$ 
and the Bergman projection associated to  ${ }_{\alpha, \beta} A(\Omega)$ is given by         
$$
{ }_{\alpha,\beta} { \bf B}_{\Omega }[f](z ) =\int_{\Omega}
{  }_{   \alpha,\beta   }  B_{\Omega }(z ,\zeta) f(\zeta) d\mu_\zeta, \quad \forall f\in\mathcal L_{2}(\Omega,\mathbb C).
$$
The space ${ }_{  0, 0  }A(\Omega)$ was studied in \cite{GM}. Thus this paper is an  extension, preserving the structure, of \cite{GM}.

The following properties of  ${ }_{\alpha, \beta} B_{\Omega }$ and ${ }_{\alpha, \beta} {\bf B}_{\Omega}$ can be directly verified. This follows by the same method as in \cite{GM}.
\begin{enumerate}
\item ${  }_{\alpha,\beta}  B_{\Omega }$   is  hermitian in complex sense. 
\item The mapping $z\mapsto { }_{\alpha,\beta} B_\Omega(z, \zeta)$ belongs to ${}_{\alpha,\beta }Hol(\Omega,\mathbb C)$ for each $\zeta \in\Omega$ fix and $h(\zeta) = {}_ {\alpha} B_\Omega(z, \zeta )$ satisfies 
$$\begin{array}{l}
\displaystyle\frac{\partial h}{\partial   \zeta_1} =- \bar\alpha  h \\
\\
\displaystyle\frac{\partial h}{\partial \zeta_2} = - \bar\beta h ,
\end{array}$$
on $\Omega$  for each $z\in \Omega$.
\item ${  }_{\alpha,\beta} B_\Omega(\cdot, \cdot)$ is the unique reproducing kernel holding  the previous properties. 	
\item ${  }_{\alpha,\beta} {\bf B}_{\Omega }$ is a continuous symmetric operator such that 
$${ }_{\alpha,\beta} {\bf B}_{\Omega }[\mathcal L_2(\Omega,\mathbb C)]={  }_{\alpha,\beta} A(\Omega),$$ 
$$({  }_{\alpha,\beta} {\bf B}_{\Omega })^2={ }_{\alpha,\beta} {\bf B}_{\Omega}.$$
\end{enumerate}
\end{remark}

\begin{corollary}
Let $\Omega, \Xi\subset \mathbb C^2\approx \mathbb H$ such that  $T(\Xi)= \Omega $,   { where }  $T$    {  is } given by \eqref{e4}  such that   and $c=0$ and $d\in \mathbb C$. Given   $\alpha,\beta, \chi , \xi \in \mathbb C$.  
Consider the following Bergman type space     
$$
{}_{\delta_T} A(\Omega) = {}_{\delta_1,\delta_2}Hol(\Omega,\mathbb C)\cap \mathcal L_2(\Omega, \mathbb C)
$$ 
and 
$$ {}_{ \alpha,  \beta  }  A_{\gamma_{_T}} (\Xi)  =  {}_{ \alpha, \beta}Hol(\Xi,\mathbb C) \cap \mathcal L_{2, \gamma_{_T} }(\Xi, \mathbb C).$$ 
Recall that  
$$\delta_T=(B_T\circ T)^{-1} (\chi + i e^{i\theta} j \xi ) (A_T\circ T^{-1} ) = \delta_1 + ie^{i\theta} \delta_2,$$
$$\gamma_{_T}(z)= e^{- { \textrm{Re} ( \overline{( \chi - \alpha  )} z_1  + \overline{( \xi - \beta)} z_2) }} \rho_{_T}(z).$$ 
Then  
$$J_T: {}^{ e^{ \textrm{Re} ( \overline{( \chi - \alpha  )} (z_1+\zeta_1)  + \overline{( \xi - \beta  )}    (z_2+\zeta_2) ) } C_T } M \circ W_T \mid_{{}
	_{\delta_1,\delta_2}   A (\Omega) }:   
 {{}
	_{\delta_1,\delta_2}  A (\Omega) } \to   {  }_{ \alpha,  \beta  }  A_{\gamma_{_T}} (\Xi) $$  
is an isometric isomorphism of complex Hilbert spaces, where $C_T, \rho_{_T}$  are  given by \eqref{L_2ISOMOR}. 

In addition, denoting $\zeta_1+i e^{i\theta } j \zeta_2  =T(z_1+ i e^{i\theta } jz_2  ) \in \Omega $ for $z_1+ i e^{i\theta } jz_2\in \Xi $ we have that  
  \begin{align*}  {} _{\alpha,\beta}  B_{\Xi, \gamma_{_T}}(z,\zeta  )= & e^{ \textrm{Re} ( \overline{( \chi - \alpha  )} (z_1+\zeta_1)  + \overline{( \xi - \beta  )}    (z_2+\zeta_2) ) }    C_T(z) \ {}_{\delta_1, \delta_2} B_{\Omega }(T(z), T(\zeta  ))
   \overline{C_T(\zeta  )}, \\
  {}_{\alpha,\beta}{\bf B}_{\Xi, \gamma_{_T}} = &  J_T  \circ {}_{\delta_1,\delta_2}{\bf B}_{\Omega}\circ\left( J_T\right)^{-1}, 
\end{align*}
where ${} _{\alpha,\beta} B_{\Xi, \gamma_{_T}} $ and ${}_{\delta_1,\delta_2  } B_{\Omega }$ are the Bergman kernels of $ { }_{\alpha,\beta }  A_{\gamma_{_T}} (\Xi) $ and ${ { }_{\delta_1,\delta_2 } A(\Omega) }$, respectively, meanwhile ${}_{\alpha,\beta}{\bf B}_{\Xi, \gamma_{_T}}
$ and $ {}_{\delta_1,\delta_2}{\bf B}_{\Omega}$ are the Bergman projections of ${ }_{\alpha,\beta }  A_{\gamma_{_T}} (\Xi) $ and ${ { }_{\delta_1,\delta_2 } A(\Omega) }$ respectively.
\end{corollary}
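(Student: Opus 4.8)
The plan is to read off the statement as the $\mathbb{C}$-valued restriction of Proposition \ref{ref123456}. First I would set $u=\alpha+ie^{i\theta}j\beta$ and $v=\chi+ie^{i\theta}j\xi$, so that the $\delta_T$ here coincides with the one produced by Proposition \ref{ProCon} for these parameters. By \eqref{e1}, a $\mathbb{C}$-valued function belongs to ${}^{\theta}_{u}\mathfrak M(\Omega)$ exactly when it solves \eqref{VSistem}; consequently ${}_{\delta_1,\delta_2}A(\Omega)$ and ${}_{\alpha,\beta}A_{\gamma_{_T}}(\Xi)$ are precisely the $\mathbb{C}$-valued parts of the quaternionic spaces ${}^{\theta}_{\delta_T}\mathcal A(\Omega)$ and ${}^{\theta}_{u}\mathcal A_{\gamma_{_T}}(\Xi)$, and on these subspaces the complex inner products agree with the quaternionic ones, exactly as recorded after Definition \ref{ComplexBergman} (the same being true for the $\gamma_{_T}$-weighted inner products). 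Using the identity $\langle v-u,z\rangle_{\theta}=\textrm{Re}(\overline{(\chi-\alpha)}z_1+\overline{(\xi-\beta)}z_2)$ computed above, the quaternionic multiplier $e^{\langle v-u,z\rangle_{\theta}}C_T$ is the complex exponential weight occurring in $J_T$, so $J_T$ is nothing but the restriction to the $\mathbb{C}$-valued subspace of the quaternionic isometric isomorphism $\Phi:={}^{e^{\langle v-u,z\rangle_{\theta}}C_T}M\circ W_T$ delivered by Proposition \ref{ref123456}.

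The heart of the argument is to confirm that this restriction is itself a bijective isometry onto ${}_{\alpha,\beta}A_{\gamma_{_T}}(\Xi)$. Because $W_T$ only changes the argument of a function, precomposition trivially preserves $\mathbb{C}$-valuedness, and the factor $e^{\langle v-u,z\rangle_{\theta}}$ is a real exponential; the one substantive requirement is therefore that $C_T$ and its reciprocal be $\mathbb{C}$-valued, and this is exactly what the hypothesis $c=0$, $d\in\mathbb{C}$ secures, since then $C_T=|a|^2|d|^{-1}d^{-1}\in\mathbb{C}$. Consequently $\Phi$ carries $\mathbb{C}$-valued functions to $\mathbb{C}$-valued functions and $\Phi^{-1}$ does likewise, so $J_T$ maps ${}_{\delta_1,\delta_2}A(\Omega)$ bijectively onto ${}_{\alpha,\beta}A_{\gamma_{_T}}(\Xi)$. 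Isometry is then automatic, since $\Phi$ is a quaternionic isometry and the relevant inner products coincide with the quaternionic ones on the complex subspaces; at the level of the defining equations this bijection is precisely the $f_2=0$ instance of the correspondence spelled out in Corollary \ref{isoBergman}.

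With the isomorphism established, the kernel and projection identities follow by the reproducing-kernel transfer argument used at the end of the proof of Proposition \ref{ref123456}: for $h\in{}_{\alpha,\beta}A_{\gamma_{_T}}(\Xi)$ I would expand $J_T^{-1}[h]$ through the reproducing property of ${}_{\delta_1,\delta_2}B_\Omega$, push the result back through $J_T$ using the isometry, and isolate the kernel relation; substituting $\langle v-u,z+\zeta\rangle_{\theta}=\textrm{Re}(\overline{(\chi-\alpha)}(z_1+\zeta_1)+\overline{(\xi-\beta)}(z_2+\zeta_2))$ into \eqref{equa123} then specializes the quaternionic kernel and projection formulas to the stated complex ones. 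I expect the preservation of $\mathbb{C}$-valuedness under $\Phi$ and $\Phi^{-1}$ to be the only genuinely delicate point; the remaining work is a direct specialization, with the antiholomorphy constraint \eqref{antiVSistem} satisfied by $\zeta\mapsto{}_{\delta_1,\delta_2}B_\Omega(z,\zeta)$ appearing as the $\mathbb{C}$-valued shadow of the membership $\zeta\mapsto{}^{\theta}_{\delta_T}B_\Omega(z,\zeta)\in\mathfrak M_{\bar\delta_T}^{\bar\theta}(\Omega)$ noted in Remark \ref{HilertSpace}.
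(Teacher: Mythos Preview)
Your proposal is correct and follows essentially the same approach as the paper: the key observation is that under the hypotheses $c=0$, $d\in\mathbb{C}$ the conformal weight $C_T$ is $\mathbb{C}$-valued, so the quaternionic isometric isomorphism restricts to the complex subspaces. The paper's one-line proof cites Proposition~\ref{ProCon} (the chain rule) together with this observation, whereas you more explicitly invoke the full Bergman-space transfer of Proposition~\ref{ref123456}; since the latter is built on the former, this is only a difference in the level of detail, not in substance.
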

\begin{proof} It suffices to use Proposition \ref{ProCon} together with the observation that $C_T$ is a $\mathbb C$-valued function. 
\end{proof}

Finally, we shall introduce another weighted Bergman type spaces in two complex variables induce by the system \eqref{VSistem}.
\begin{definition} 
Given a domain $\Omega\subset\mathbb C^2 \approx \mathbb H$ and $\alpha,\beta\in \mathbb C$. Set $A _{t_{\alpha,\beta}} (\Omega)$ consists of $f\in  {}_{\alpha,\beta}Hol(\Omega, \mathbb C) $ such that 
		\begin{align*}
		\int_{\Omega} |f(z)|^2  t_{\alpha,\beta}(z)  d\mu_z   <\infty  
		\end{align*}where 
		$t_{\alpha,\beta}(z)= e^{   \textrm{Re} ( \bar  \alpha   z_1  + \bar  \beta     z_2) } $ and  $z= z_1+ie^{i  \theta }j z_2$ and define  
	 			\begin{align*}
 		\|f\|_{  A _{t_{\alpha,\beta}} (\Omega)}^2: = &   \int_{\Omega} |f|^2  t_{\alpha,\beta} d\mu, \quad  
	\left\langle f,g\right\rangle_{ A _{t_{\alpha,\beta}} (\Omega)}:=  
	\int_{\Omega} \bar f g  t_{\alpha,\beta}   d\mu   , \\
		{}_{ \alpha, \beta  }\mathcal P [f](z):= &   e^{\textrm{Re} ( \bar \alpha   z_1  + \bar  \beta     z_2) }  f(z) , \quad    \forall  z= z_1+ie^{i  \theta }j z_2\in \Omega ,  
		 \end{align*}
for all $f, g\in A _{t_{\alpha,\beta}} (\Omega)$.   
\end{definition}
The particular case $A_{t_{0,0}}(\Omega)=:A(\Omega)$ was considered in \cite{GM}.
\begin{corollary}   Given $ \alpha, \beta ,\chi,\xi  \in \mathbb C$. Then the operators   
${}_{ \alpha,\beta  }    \mathcal P :  A _{t_{\alpha,\beta}}(\Omega)\to A(\Omega) $ and 
${}_{ \alpha-\chi,  \beta -\xi  }     \mathcal P  :   A _{t_{\alpha,\beta}}(\Omega) \to    A _{t_{\chi,\xi}}(\Omega) $  
are isometric isomorphisms of complex Hilbert spaces and  
\begin{align*} {}_{t_{\chi,\xi}}   B_{\Omega}(z,\zeta )= & 
e^{   \textrm{Re} ( \overline{  (\alpha- \chi) } (z_1+\zeta_1)  + \overline{(  \beta -\xi)}  (  z_2 - \zeta_2)   ) }  
 {}_{t_{\alpha,\beta }}    B_{\Omega}(z,\zeta ) ,\\
 {}_{t_{\chi,\xi}}{ \bf B}_{\Omega} =&  
{}_{\alpha- \chi , \beta-\xi}      \mathcal P  \circ {}_{t_{\alpha,\beta}}{\bf B}_{\Omega}
\circ {}_{      \chi- \alpha,  \xi-\beta     }   \mathcal P,
\end{align*}
    where  
${}_{   \alpha, \beta   }  B_{\Omega}$ and  $ {}_{\alpha, \beta} {\bf B}_{\Omega}$ 
are  the kernel and the projection 
 of  $ \mathcal A _{t_{\alpha,\beta}}(\Omega)$ respectively, and 
 ${}_{   \chi , \xi    }  B_{\Omega}$ and  $ {}_{\chi, \xi} {\bf B}_{\Omega}$ 
are  the kernel and the projection of  $ \mathcal A _{t_{\chi ,\xi }}(\Omega)$.
\end{corollary}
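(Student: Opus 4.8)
The statement is the $\mathbb C$-valued, two-variable replica of Proposition~\ref{proo2}, and the plan is to rerun the proof of that proposition with the scalar multiplier ${}_{\alpha,\beta}\mathcal P$ in place of ${}_u^\theta\mathcal S$. The bridge is the remark following \eqref{VSistem}: a $\mathbb C$-valued $f$ lies in ${}_{\alpha,\beta}Hol(\Omega,\mathbb C)$ exactly when it lies in ${}_u^\theta\mathfrak M(\Omega)$ for $u=\alpha+ie^{i\theta}j\beta$, and the weight satisfies $t_{\alpha,\beta}(z)=e^{\langle u,z\rangle_\theta}$ with $\langle u,z\rangle_\theta=\textrm{Re}(\bar\alpha z_1+\bar\beta z_2)$ (the elementary identity already used in this section). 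Thus ${}_{\alpha,\beta}\mathcal P$ is the restriction of ${}_u^\theta\mathcal S$ to the complex slice of the quaternionic spaces, and $\mathbb C$-valuedness is preserved because the multiplier $e^{\langle u,z\rangle_\theta}$ is a positive real scalar.

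First I would check that the two maps land where claimed. Because multiplication by $e^{\langle u,z\rangle_\theta}$ intertwines ${}^\theta\mathcal D$ with ${}_u^\theta\mathcal D$ by \eqref{exponential0}, the operator ${}_{\alpha,\beta}\mathcal P$ sends a solution of \eqref{VSistem} to a solution of the homogeneous $\bar\partial$-system, i.e.\ into $Hol(\Omega,\mathbb C)$; feeding instead the exponent of ${}_{\alpha-\chi,\beta-\xi}\mathcal P$ into the same identity turns the $(\alpha,\beta)$-system into the $(\chi,\xi)$-system, so ${}_{\alpha-\chi,\beta-\xi}\mathcal P$ maps ${}_{\alpha,\beta}Hol(\Omega,\mathbb C)$ into ${}_{\chi,\xi}Hol(\Omega,\mathbb C)$. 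The isometry is then the exact analogue of \eqref{finalprop}:
\[
\langle {}_{\alpha,\beta}\mathcal P[f],{}_{\alpha,\beta}\mathcal P[g]\rangle_{A(\Omega)}
=\int_\Omega\overline{{}_{\alpha,\beta}\mathcal P[f]}\;{}_{\alpha,\beta}\mathcal P[g]\,d\mu
=\int_\Omega\bar f\,g\,t_{\alpha,\beta}\,d\mu
=\langle f,g\rangle_{A_{t_{\alpha,\beta}}(\Omega)},
\]
and likewise for ${}_{\alpha-\chi,\beta-\xi}\mathcal P$ against $t_{\chi,\xi}$, since the squared multiplier converts the weight $t_{\chi,\xi}$ into $t_{\alpha,\beta}$.

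Bijectivity and $\mathbb C$-linearity are immediate from the explicit inverse and the semigroup law for the multipliers, the $\mathbb C$-valued copies of the two displayed identities closing Proposition~\ref{proo2}: $({}_{\chi,\xi}\mathcal P)^{-1}={}_{-\chi,-\xi}\mathcal P$ and ${}_{\alpha-\chi,\beta-\xi}\mathcal P={}_{-\chi,-\xi}\mathcal P\circ{}_{\alpha,\beta}\mathcal P$ (both following from ${}_{a,b}\mathcal P\circ{}_{c,d}\mathcal P={}_{a+c,b+d}\mathcal P$). Hence each ${}_{\alpha,\beta}\mathcal P$ is a surjective $\mathbb C$-linear isometry, i.e.\ an isometric isomorphism of complex Hilbert spaces; in particular it transports boundedness of the evaluation functional, so the target weighted space inherits a reproducing kernel and a Bergman projection.

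It remains to transport the reproducing data through $U:={}_{\alpha-\chi,\beta-\xi}\mathcal P$, exactly as in the closing lines of Proposition~\ref{proo2}: combining the inner-product identity above with the reproducing property of ${}_{t_{\alpha,\beta}}B_\Omega$ forces $U\big[{}_{t_{\alpha,\beta}}B_\Omega(\cdot,\zeta)\big]$ to be the reproducing element of $A_{t_{\chi,\xi}}(\Omega)$ at $\zeta$, which yields both the conjugation formula ${}_{t_{\chi,\xi}}\mathbf B_\Omega=U\circ{}_{t_{\alpha,\beta}}\mathbf B_\Omega\circ U^{-1}$ and the pointwise kernel identity. I expect the one genuinely delicate point to be the bookkeeping of the exponential prefactor in the kernel relation: the multiplier of $U$ contributes its value at $z$ and, through the second slot of the kernel, its value at $\zeta$ --- recall that by property (2) in the remark after Definition~\ref{ComplexBergman} the map $\zeta\mapsto {}_{t_{\alpha,\beta}}B_\Omega(z,\zeta)$ solves the conjugated system $\partial_{\zeta_1}h=-\bar\alpha h$, $\partial_{\zeta_2}h=-\bar\beta h$ --- so one must track the conjugations in the two complex components carefully to arrive at the stated factor $e^{\textrm{Re}(\overline{(\alpha-\chi)}(z_1+\zeta_1)+\overline{(\beta-\xi)}(z_2-\zeta_2))}$, the only place where the two variables enter asymmetrically.
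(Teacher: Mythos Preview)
Your approach is exactly the paper's: the published proof is the single sentence ``It follows from Proposition~\ref{proo2},'' and you have simply unpacked that reduction in detail, identifying ${}_{\alpha,\beta}\mathcal P$ with the restriction of ${}_u^\theta\mathcal S$ for $u=\alpha+ie^{i\theta}j\beta$ via the identity $\langle u,z\rangle_\theta=\textrm{Re}(\bar\alpha z_1+\bar\beta z_2)$.

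One computational slip to flag: in your displayed isometry check you write $\int_\Omega\bar f\,g\,t_{\alpha,\beta}\,d\mu$, but since ${}_{\alpha,\beta}\mathcal P[f]=t_{\alpha,\beta}\cdot f$ with $t_{\alpha,\beta}$ real, the product $\overline{{}_{\alpha,\beta}\mathcal P[f]}\,{}_{\alpha,\beta}\mathcal P[g]$ actually equals $\bar f\,g\,t_{\alpha,\beta}^{2}$. In the quaternionic template the weight is $e^{2\langle u,z\rangle_\theta}$ while the multiplier is $e^{\langle u,z\rangle_\theta}$, so the squared multiplier matches the weight; here the paper's definition sets the weight equal to $t_{\alpha,\beta}=e^{\langle u,z\rangle_\theta}$ rather than its square, so the literal transcription of \eqref{finalprop} does not close up as written. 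This is a normalization mismatch in the definitions rather than a flaw in your strategy, but your isometry line silently absorbs it; you should either note the discrepancy or adjust the exponent so that the squared multiplier genuinely reproduces the target weight.
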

\begin{proof}It follows from  
	Proposition \ref{proo2}.
\end{proof}
\begin{remark}
\begin{enumerate}
\item   
If $$\Omega \subset \{ z= z_1+ie^{i  \theta }j z_2  \in \mathbb H \  \mid \  z_1,z_2\in \mathbb C, \ 
\textrm{Re} ( \bar  \alpha   z_1  + \bar  \beta     z_2)  <0 \}$$
then ${}_{\alpha, \beta} A (\Omega)\subset    A_{t_{\alpha ,\beta}} (\Omega) $ as  function sets.
\item If 
$$\Omega \subset \{ z= z_1+ie^{i  \theta }j z_2  \in \mathbb H \  \mid \  
z_1,z_2\in \mathbb C, \ 
\textrm{Re} ( \bar  \alpha   z_1  + \bar  \beta     z_2)  >0  \}$$
then   $    A_{t_{\alpha ,\beta}} (\Omega)  \subset  
 {}_{ \alpha, \beta } A (\Omega) $  as  function sets. 
\item If $\Omega$ is a bounded domain then  $   A_{t_{\alpha ,\beta}} (\Omega)  =  {}_{ \alpha ,\beta }     A (\Omega) $ as  function sets.
\end{enumerate}
\end{remark}
\section{Acknowledgements}
The authors are thankful to the anonymous referees for the thoughtful and careful evaluation of the manuscript and the many helpful comments and suggestions.
\section{Concluding remarks}
This work is an extension of previous studies developed in \cite{GM, SV3, SV4}. Our contribution here was the design of a methodology for investigation of Bergman spaces in the context of certain perturbed conventional function theories, which are associated with the solutions of first order linear partial differential equation systems embedded into one of the classes of $(\theta, u)-$hyperholomorphic functions. It seems reasonable to expect that this viewpoint will prove to be rather promising and long-range. A discussion on the specific example of $(\alpha, \beta)-$holomorphic functions in two complex variables where such embedding proves to be quite useful and illustrative for the study of Bergman type spaces in domains of $\mathbb C^2$.

There exists another conventional function theory to be considered. This is the case of the Cimmino system, which was introduced in 1941 by G. Cimmino \cite{C}, see also \cite{DL}. The next step to our work would be to develop of Bergman spaces theory induced by the Cimmino system, whose solutions form a proper subset of the $\displaystyle\frac{\pi}{2}-$hyperholomorphic functions class.

\section*{Declarations}
\subsection*{Funding} Instituto Polit\'ecnico Nacional (grant number SIP20211188) and CONACYT.}
\subsection*{Conflict of interest} The authors declare that they have no conflict of interest regarding the publication of this paper.
\subsection*{Author contributions} Both authors contributed equally to the manuscript and typed, read, and approved the final form of the manuscript, which is the result of an intensive collaboration.
\subsection*{Availability of data and material} Not applicable
\subsection*{Code availability} Not applicable


\begin{thebibliography}{99}
\bibitem{Ah} L. V. Ahlfors, \textit{Clifford numbers and M\"obius transformations in $\mathbb R^n$, in: Clifford Algebras and their Applications in Mathematical Phisics}, J. S. R. Chrisholm and A. K. Common (eds.), NATO Adv. Study Inst. Ser., Ser. C: Math. Phys. Sci.,  1986,  {
183}, Reidel, 167--175.

\bibitem{B} S. Bergman, \textit{The Kernel Function and Conformal Mapping}, Amer. Math. Soc., Providence 1950.

\bibitem{BHZ} A. Borichev, H. Hedenmalm, K. Zhu, \textit{Bergman spaces and related topics in complex analysis}, Contemporary Mathematics, 404. Israel Mathematical Conference Proceedings. American Mathematical Society, Providence, RI; Bar-Ilan

\bibitem{BD} F. Brackx, R. Delanghe,  \textit{Hypercomplex function theory and Hilbert modules with reproducing kernel.} Proc. London Math. Soc. 1978, {s3-37} 3, 545--576.

\bibitem{C} G. Cimmino. \textit{Su alcuni sistemi lineari omogenei di equazioni alle derivate parziali del primo ordine}. (Italian) Rend. Sem. Mat. Univ. Padova 12(1941), 89–113.

\bibitem{DL} S. Dragomir, E. Lanconelli. \textit{On the first order linear PDE systems all of whose solutions are harmonic functions}. Tsukuba J Math. 2006; 30:149-170.

\bibitem{DF} B. A. Dubrovin, S. P. Novikov, A.T. Fomenko, \textit{Modern Methods and applications}, Springer,  1984, v.1.

\bibitem{DS} P. Duren, A. Schuster, \textit{Bergman spaces}, Mathematical Surveys and Monographs, 100. American Mathematical Society, Providence, RI, 2004.

\bibitem{GM} J.O. Gonz\'alez-Cervantes, M. Shapiro, \textit{Hyperholomorphic Bergman spaces and Bergman operators associated to domains in $\mathbb C^2$}. Complex Anal. Oper. Theory 2 (2008), no. 2, 361--382. 

\bibitem{GS1} K. Gürlebeck, W. Sprö$\beta$ig. \textit{Quaternionic Analysis and Elliptic Boundary Value Problems}. Birkhäuser, Boston (1990).

\bibitem{HG} R. Heidrich, G. Jank, \textit{On iteration of quaternionic M\"obius transformation}, Compl. Var. Theory Appls., 1996,
vol. 29. 313- 318.

\bibitem{HKZ} H. Hedenmalm, B Korenblum, K. Zhu, \textit{Theory of Bergman spaces, Graduate Texts in Mathematics}, 199. Springer-Verlag, New York, 2000.

\bibitem{KS} V. V. Kravchenko, M. V. Shapiro. \textit{Integral representations for spatial models of mathematical physics}, Pitman Research Notes in Mathematics Series 351, Adison Wesley Longman; 1996.  

\bibitem{MS} I. Mitelman, M. Shapiro. \textit{Differentiation of the Martinelli-Bochner integrals and the notion of the hyperderivability}. Math. Nachr., 1995, 172, 211-238.

\bibitem{No1} K. Nˆono. \textit{On the quaternion linearization of Laplacian 1}. Bull. Fukuoka Univ. Ed. III 35, 5-10 (1986).

\bibitem{S1} M. Shapiro. \textit{Quaternionic analysis and some conventional theories}. In: Alpay, D. (ed.) Operator Theory, 1423-1446. Springer, Basel (2015).

\bibitem{S2} M. Shapiro. \textit{Some remarks on generalizations of the one dimensional complex analysis: hypercomplex approach}. (Trieste, 1993), in: Functional Analytic Methods in Complex Analysis and Applications to Partial Differential Equations, World Scienti.c Publ., River Edge, NJ, 1995, 379–401.

\bibitem{SV3} M. Shapiro, N. L. Vasilevski. \textit{On the Bergman Kernel Function in hyperholomorphic analysis}. Acta Appl. Math. (1997), {46}, 1--27. 

\bibitem{SV4} M. V. Shapiro, N. L. Vasilevski. \textit{On the Bergman kernel function in the Clifford analysis}. Clifford algebras and their applications in mathematical physics (Deinze, 1993), 183--192, Fund. Theories Phys., 55, Kluwer Acad. Publ., Dordrecht, 1993.

\bibitem{S} A. Sudbery. \text{Quaternionic analysis}. Math. Proc. Phil. Soc. 1979, {85}, 199--225.

\bibitem{VS} N. L. Vasilevsky, M.V. Shapiro. \text{Some questions of hypercomplex analysis}, in: Complex analysis and Applications’87 (Varna, 1987), Publ. House Bulgar. Acad. Sci., Sofia, 1989, 523–531.

\end{thebibliography}
\end{document}